\definecolor{egyptianblue}{rgb}{0.06, 0.2, 0.65}
\definecolor{cobalt}{rgb}{0.0, 0.28, 0.67}
\definecolor{yaleblue}{rgb}{0.06, 0.3, 0.57}
\definecolor{maroon}{rgb}{0.5, 0.0, 0.0}
\definecolor{indigo}{rgb}{0.0, 0.25, 0.42}
\definecolor{armygreen}{rgb}{0.29, 0.33, 0.13}
\definecolor{huntergreen}{rgb}{0.21, 0.37, 0.23}
\definecolor{royalblue}{rgb}{0.0, 0.14, 0.4}
\definecolor{zaffre}{rgb}{0.0, 0.08, 0.66}
\definecolor{myurlcolor}{rgb}{0.5, 0.0, 0.0}%0.8941,0,0.1686}
\definecolor{mycitecolor}{rgb}{0.06, 0.2, 0.65}%{0.0627,0.3843,0.6039}
\definecolor{myrefcolor}{rgb}{0.06, 0.2, 0.65}
\DeclareMathOperator{\Vol}{Vol}
\renewcommand{\d}{{\rm d}}
\newcommand{\x}{[x]}
\newcommand{\Prob}{{\rm Prob}}
\newcommand{\Pb}{{\rm P}}
\newcommand{\Pbm}{{\rm P}^m}
\newcommand{\EXm}{{\mathds E}_m}
\newcommand{\Gm}{G_m}
\newcommand{\Gmdual}{\tilde{G}_m}
\newcommand{\BB}{{\mathds B}}
\renewcommand{\SS}{{\mathds S}}
\newcommand{\Sm}{{\mathds S}_m}
\newcommand{\Bm}{{\mathds B}_m}
\newcommand{\Bmdual}{\tilde{\mathds B}_m}
\newcommand{\Smdual}{\tilde{\mathds S}_m}
\newcommand{\Zp}{\mathds{Z}_p}
\newcommand{\Qp}{\mathds{Q}_p}
\newcommand{\M}{\mathcal{M}}
\newcommand{\e}{{\rm e}}
\newcommand{\X}{\tilde{X}}
\newcommand{\Y}{\tilde{Y}}
\newcommand{\F}{\mathcal F}
\newcommand{\Fm}{\mathcal F_m}
\newcommand{\C}{\mathscr C}
\renewcommand{\)}{\right)}
\DeclarePairedDelimiter\floor{\lfloor}{\rfloor}
\DeclarePairedDelimiter\Gmap{[}{]}
\DeclarePairedDelimiter\Gdmap{\langle}{\rangle}
\newcommand{\y}{\Gdmap{y}}
\renewcommand{\x}{\Gmap{x}}
\DeclarePairedDelimiter\ceil{\lceil}{\rceil}
\newtheorem{theorem}{Theorem}[section]
\newtheorem{lemma}{Lemma}[section]
\newtheorem{proposition}{Proposition}[section]
\theoremstyle{definition}
\theoremstyle{remark}
\def\namedlabel#1#2{\begingroup
    #2%
    \def\@currentlabel{#2}%
    \phantomsection\label{#1}\endgroup
}
\begin{document}

\title{Brownian Motion in the $\bm p$\,-Adic Integers is a Limit of Discrete Time Random Walks}
\author{Tyler Pierce$^\ast$ and David Weisbart$^\ast$}
\address{\begin{tabular}[h]{cc}
 $^\ast$Department of Mathematics\\
   University of California, Riverside \\&\end{tabular}}
\email{tpier002@ucr.edu}\email{weisbart@math.ucr.edu}

\begin{abstract}
Vladimirov defined an operator on balls in $\Qp$, the $p$-adic numbers, that is analogous to the Laplace operator in the real setting.  Kochubei later provided a probabilistic interpretation of the operator.  This \emph{Vladimirov-Kochubei operator} generates a real-time diffusion process in the ring of $p$-adic integers, a Brownian motion in $\Zp$.  The current work shows that this process is a limit of discrete time random walks.  It motivates the construction of the Vladimirov-Kochubei operator, provides further intuition about the properties of ultrametric diffusion, and gives an example of the weak convergence of stochastic processes in a profinite group.
\end{abstract}

\maketitle

\tableofcontents

%

%%%%%%%%%%%%%%%%%%%%%%%%%%%%%%%%%%%%%%%%%%%%%%%%%%%%%%%%%%%%%%%%%%%%%%
%%%%%%%%%%%%%%%%%%%%%%%%%%%%%%%%%%%%%%%%%%%%%%%%%%%%%%%%%%%%%%%%%%%%%%
%%%%%%%%%%%%%%%%%%%%%%%%%%%%%%%%%%%%%%%%%%%%%%%%%%%%%%%%%%%%%%%%%%%%%%
%%%%%%%%%%%%%%%%%%%%%%%%%%%%%%%%%%%%%%%%%%%%%%%%%%%%%%%%%%%%%%%%%%%%%%
%%%%%%%%%%%%%%%%%%%%%%%%%%%%%%%%%%%%%%%%%%%%%%%%%%%%%%%%%%%%%%%%%%%%%%
%%%%%%%%%%%%%%%%%%%%%%%%%%%%%%%%%%%%%%%%%%%%%%%%%%%%%%%%%%%%%%%%%%%%%%
%%%%%%%%%%%%%%%%%%%%%%%%%%%%%%%%%%%%%%%%%%%%%%%%%%%%%%%%%%%%%%%%%%%%%%
%%%%%%%%%%%%%%%%%%%%%%%%%%%%%%%%%%%%%%%%%%%%%%%%%%%%%%%%%%%%%%%%%%%%%%
%%%%%%%%%%%%%%%%%%%%%%%%%%%%%%%%%%%%%%%%%%%%%%%%%%%%%%%%%%%%%%%%%%%%%%
%%%%%%%%%%%%%%%%%%%%%%%%%%%%%%%%%%%%%%%%%%%%%%%%%%%%%%%%%%%%%%%%%%%%%%
%%%%%%%%%%%%%%%%%%%%%%%%%%%%%%%%%%%%%%%%%%%%%%%%%%%%%%%%%%%%%%%%%%%%%%
%%%%%%%%%%%%%%%%%%%%%%%%%%%%%%%%%%%%%%%%%%%%%%%%%%%%%%%%%%%%%%%%%%%%%%

\thispagestyle{empty}

%%%%%%%%%%%%%%%%%%%%%%%%%%%%%%%%%%%%%%%%%%%%%%%%%%%%%%%%%%%%%%%%%%%%%%
%%%%%%%%%%%%%%%%%%%%%%%%%%%%%%%%%%%%%%%%%%%%%%%%%%%%%%%%%%%%%%%%%%%%%%
%%%%%%%%%%%%%%%%%%%%%%%%%%%%%%%%%%%%%%%%%%%%%%%%%%%%%%%%%%%%%%%%%%%%%%
%%%%%%%%%%%%%%%%%%%%%%%%%%%%%%%%%%%%%%%%%%%%%%%%%%%%%%%%%%%%%%%%%%%%%%
%%%%%%%%%%%%%%%%%%%%%%%%%%%%%%%%%%%%%%%%%%%%%%%%%%%%%%%%%%%%%%%%%%%%%%
%%%%%%%%%%%%%%%%%%%%%%%%%%%%%%%%%%%%%%%%%%%%%%%%%%%%%%%%%%%%%%%%%%%%%%
%%%%%%%%%%%%%%%%%%%%%%%%%%%%%%%%%%%%%%%%%%%%%%%%%%%%%%%%%%%%%%%%%%%%%%
%%%%%%%%%%%%%%%%%%%%%%%%%%%%%%%%%%%%%%%%%%%%%%%%%%%%%%%%%%%%%%%%%%%%%%
%%%%%%%%%%%%%%%%%%%%%%%%%%%%%%%%%%%%%%%%%%%%%%%%%%%%%%%%%%%%%%%%%%%%%%
%%%%%%%%%%%%%%%%%%%%%%%%%%%%%%%%%%%%%%%%%%%%%%%%%%%%%%%%%%%%%%%%%%%%%%
%%%%%%%%%%%%%%%%%%%%%%%%%%%%%%%%%%%%%%%%%%%%%%%%%%%%%%%%%%%%%%%%%%%%%%
%%%%%%%%%%%%%%%%%%%%%%%%%%%%%%%%%%%%%%%%%%%%%%%%%%%%%%%%%%%%%%%%%%%%%%

\section{Introduction}

%%%%%%%%%%%%%%%%%%%%%%%%%%%%%%%%%%%%%%%%%%%%%%%%%%%%%%%%%%%%%%%%%%%%%%
%%%%%%%%%%%%%%%%%%%%%%%%%%%%%%%%%%%%%%%%%%%%%%%%%%%%%%%%%%%%%%%%%%%%%%
%%%%%%%%%%%%%%%%%%%%%%%%%%%%%%%%%%%%%%%%%%%%%%%%%%%%%%%%%%%%%%%%%%%%%%
%%%%%%%%%%%%%%%%%%%%%%%%%%%%%%%%%%%%%%%%%%%%%%%%%%%%%%%%%%%%%%%%%%%%%%
%%%%%%%%%%%%%%%%%%%%%%%%%%%%%%%%%%%%%%%%%%%%%%%%%%%%%%%%%%%%%%%%%%%%%%
%%%%%%%%%%%%%%%%%%%%%%%%%%%%%%%%%%%%%%%%%%%%%%%%%%%%%%%%%%%%%%%%%%%%%%
%%%%%%%%%%%%%%%%%%%%%%%%%%%%%%%%%%%%%%%%%%%%%%%%%%%%%%%%%%%%%%%%%%%%%%
%%%%%%%%%%%%%%%%%%%%%%%%%%%%%%%%%%%%%%%%%%%%%%%%%%%%%%%%%%%%%%%%%%%%%%
%%%%%%%%%%%%%%%%%%%%%%%%%%%%%%%%%%%%%%%%%%%%%%%%%%%%%%%%%%%%%%%%%%%%%%
%%%%%%%%%%%%%%%%%%%%%%%%%%%%%%%%%%%%%%%%%%%%%%%%%%%%%%%%%%%%%%%%%%%%%%
%%%%%%%%%%%%%%%%%%%%%%%%%%%%%%%%%%%%%%%%%%%%%%%%%%%%%%%%%%%%%%%%%%%%%%
%%%%%%%%%%%%%%%%%%%%%%%%%%%%%%%%%%%%%%%%%%%%%%%%%%%%%%%%%%%%%%%%%%%%%%

For any prime $p$ and any positive real number $b$, the Vladimirov operator $\Delta_b$ (see Section~\ref{Sec:TheLimitingProcess}) is an example of one type of pseudodifferential operator on the field of $p$-adic numbers, $\Qp$.  The Vladimirov operator is the $p$-adic analog of the fractional Laplacian in the real setting.  Take $\rho$ to be the fundamental solution to the parabolic pseudodifferential equation \begin{equation}\label{Intro:HeatEquation}\frac{{\rm d}}{{\rm d}t}\rho(t,x) = -\Delta_b\rho(t,x).\end{equation} The indexed collection of functions $(\rho(t, \cdot))_{t>0}$ forms a convolution semigroup of probability density functions that gives rise to a probability measure $\Pb$ on $D([0, \infty)\colon \Qp)$, the Skorohod space of $\Qp$-valued c\`{a}dl\`{a}g functions on $[0,\infty)$ endowed with the Skorohod metric \cite{Varadarajan:LMP:1997}.  The measure $\Pb$ is a $p$-adic analog of the Wiener measure on the space of continuous, real-valued paths.  Take $Y$ to be the function on $[0, \infty)\times D([0, \infty)\colon \Qp)$ that is defined on any pair $(t, \omega)$ by \begin{equation}\label{Intro:Eq:YDef}Y(t,\omega) = \omega(t).\end{equation} Curry variables to obtain for each $t$ in $[0,\infty)$ the random variable $Y_t$ on $D([0, \infty)\colon \Qp)$ that acts on any $\omega$ in $D([0, \infty)\colon \Qp)$ by \begin{equation}\label{Intro:Eq:YtDef}Y_t(\omega) = \omega(t).\end{equation} Because of the close formal analogy between the Vladimirov operator and the Laplace operator, the current work refers to the stochastic process $(D([0, \infty)\colon \Qp), \Pb, Y)$ as a $p$-adic Brownian motion.

Authors traditionally refer to diffusion processes of the type that the current paper studies as ultrametric diffusion processes \cite{ABKO:JPA:2002, Bik:UAA:2010}, but such processes may potentially be more general.  In some respects, a true analog of real Brownian motion in the $p$-adic setting is the process that Bikulov and Volovich introduced \cite{BV:IzvMath:1997}.  This is a very different stochastic process, one that is parameterized by a $p$-adic time variable.

Motivated by the recent works of Khrennikov, Oleschko, and Correa Lopez on $p$-adic models of a porous medium \cite{Khrennikov:JFAA:2016, Khrennikov:Entropy:2016}, Kochubei recently studied both linear and nonlinear heat equations on balls in $\Qp$ \cite{Koch:UMJ:2018}.  These works motivate the current investigation.  There is a challenge even in the real setting, as Kochubei pointed out, of interpreting the meaning of a non-local operator on a bounded domain \cite{BonoforteVazquez:NLA:2016}.  Vladimirov defined a version of the operator $\Delta_{b,N}$ on the ball $\BB(N)$, the ball of radius $p^N$ in $\Qp$ centered at $0$ \cite{Vlad90}.  Kochubei gave a probabilistic interpretation of the operator $\Delta_{b,N}$ \cite{Kochubei:Book:2001}, and explicitly gave the heat kernel for this operator \cite[Theorem~4.9]{Kochubei:Book:2001}.

The study of Brownian motion in non-Archimedean settings is not only of intrinsic mathematical interest but also appears to have substantial physical applications. For example, Parisi's early work on spin glasses \cite{Parisi:PRL:1979, Parisi:JPA:1980} implicitly involved ultrametric structures.  Later, Parisi and Sourlas \cite{Parisi_Sourlas:JPA:1999} and Avetisov, Bikulov, and Kozyrev \cite{Avetisov_Bikulov_Kozyrev:JPA:1999} independently proposed $p$-adic models to describe replica symmetry breaking. Significant applications of $p$-adic diffusion in physics which have received experimental confirmation are found in the works of Avetisov, Bikulov, and Zubarev \cite{ABZ:ProcSteklov:2014}, and Bikulov and Zubarev \cite{BZ:Physica:2021}, who utilized the Vladimirov equation to describe protein molecule dynamics.  Since Brownian motion in the real setting is a scaling limit, a general question arises:  In what sense can sequences of discrete time random walks approximate Brownian motion in non-Archimedean settings?  

Earlier works demonstrate that Brownian motion in several non-Archimedean settings are scaling limits \cite{BW, WJPA, W:Expo:24}. The first of these works \cite{BW} shows that $p$-adic Brownian motion is a limit of discrete-time random walks. The second work \cite{WJPA} simplified the approach and improved the results of the first by showing that $p$-adic Brownian motion is a scaling limit of a single discrete-time process valued in a discrete group. This extended the results to a wider class of Vladimirov operators and demonstrated the weak convergence of the processes on unbounded time intervals, rather than just compact time intervals. The latest work \cite{W:Expo:24} extended the second to the setting of Brownian motion in any finite-dimensional vector space over any non-Archimedean local field of any characteristic. The current work provides what appears to be the first example of such results in the setting of a profinite group by demonstrating that any Brownian motion with paths in the ring of $p$-adic integers is a limit of discrete-time random walks.

Section~\ref{Sec:TheLimitingProcess} reviews the basic theory of harmonic analysis on $\Qp$ and its unit ball $\Zp$, and presents in a restricted setting the operator that Vladimirov and Kochubei developed and studied, the \emph{Vladimirov-Kochubei operator} on $\Zp$.  It also reviews some properties of the Brownian motion associated with the Vladimirov-Kochubei operator.  Section~\ref{Sec:TheDiscreteSettings} develops for each natural number $m$ a discrete time random walk on $\Zp$ by first constructing a random walk on each finite group $\Zp\slash p^m\Zp$, and then by embedding the finite groups into $\Zp$ to obtain a sequence of embedded processes.  Section~\ref{Sec:Moments} establishes moment estimates for the processes in the finite groups.  Given an appropriate time scale for each embedded process, Section~\ref{Sec:Convergence} establishes a uniform moment estimate for the sequence of measures $(\Pbm)$ on $D([0, \infty)\colon \Zp)$ that are associated with the embedded processes.  These moment estimates show that the sequence of measures is uniformly tight \cite{cent}.  Section~\ref{Sec:Convergence} also establishes the convergence of the finite dimensional distributions of the approximating measures to the finite dimensional distributions of the measure $\Pb$ for the process that Kochubei constructed.  The convergence of the finite dimensional distributions together with the uniform tightness of the sequence of approximating measures implies Theorem~\ref{Sec:Con:Theorem:MAIN}, the main result of the current work.  

\begin{theorem}\label{Sec:Con:Theorem:MAIN}
The sequence of measures $(\Pbm)$ converges weakly to $\Pb$.
\end{theorem}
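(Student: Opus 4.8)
The plan is to deduce the weak convergence from the two ingredients that Sections~\ref{Sec:Moments} and~\ref{Sec:Convergence} supply: uniform tightness of the family $(\Pbm)$ and convergence of its finite dimensional distributions to those of $\Pb$. The guiding principle is the standard characterization of weak convergence on the Skorohod space. Since $\Zp$ is compact, $D([0,\infty)\colon\Zp)$ is a Polish space, so by Prohorov's theorem a tight sequence is relatively compact, and it suffices to show that every weak limit point coincides with $\Pb$. Because a probability measure on $D([0,\infty)\colon\Zp)$ is determined by its finite dimensional distributions on a dense set of times, the convergence of finite dimensional distributions will pin down that common limit.

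First I would invoke the uniform moment estimate of Section~\ref{Sec:Convergence} to verify tightness through the Chentsov-type criterion of \cite{cent}. For c\`adl\`ag paths one cannot use a two-point Kolmogorov bound, because the paths jump; instead one controls a product of increments over three ordered times, establishing a bound of the shape
\begin{equation*}
\EXm\bigl[\,d(Y_r,Y_s)^{\gamma}\,d(Y_s,Y_t)^{\gamma}\,\bigr]\le \bigl(F(t)-F(r)\bigr)^{\alpha},\qquad r\le s\le t,
\end{equation*}
with $\alpha>1$ and $F$ continuous and nondecreasing, uniformly in $m$. Tightness of the one-dimensional marginals is automatic because $\Zp$ is compact, so this product estimate yields tightness of $(\Pbm)$ on $D([0,\infty)\colon\Zp)$.

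Second I would use the explicit heat kernels, namely the transition probabilities of the embedded discrete walks together with the Kochubei kernel for $\Pb$, to establish that for every finite set of times $0\le t_1<\cdots<t_k$ the joint law of $(Y_{t_1},\ldots,Y_{t_k})$ under $\Pbm$ converges to its law under $\Pb$. The care required here is the reason I expect this to be the main obstacle: the coordinate projections are not continuous on the Skorohod space at paths that jump at one of the $t_i$, so convergence of finite dimensional distributions does not immediately transfer to weak limit points. The remedy is to observe that the Vladimirov-Kochubei process has no fixed times of discontinuity, that is, for each fixed $t$ one has $\Pb(Y_t=Y_{t-})=1$; consequently all but at most countably many times are continuity times of $\Pb$, these times are dense in $[0,\infty)$, and the projections restricted to them pass to weak limits.

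Finally I would assemble the pieces. Given tightness, Prohorov's theorem produces, from any subsequence of $(\Pbm)$, a further subsequence converging weakly to some measure $Q$. Evaluating along a dense set of continuity times of $\Pb$ and applying the second step shows that $Q$ and $\Pb$ share all finite dimensional distributions, whence $Q=\Pb$. Since every subsequence thus admits a sub-subsequence converging to the same limit $\Pb$, the full sequence $(\Pbm)$ converges weakly to $\Pb$, which is the assertion of Theorem~\ref{Sec:Con:Theorem:MAIN}.
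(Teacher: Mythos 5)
Your proposal is correct and follows essentially the same route as the paper: the same Chentsov-type product-of-increments moment bound gives uniform tightness (Proposition~\ref{tight}), the same convergence of the discrete heat kernels gives convergence of the finite dimensional distributions (Proposition~\ref{5:prop:restrictedhistconv}), and these two ingredients are combined to yield weak convergence. The only difference is bookkeeping in the final assembly: the paper combines tightness with convergence on the $\pi$-system of cylinder sets whose routes are balls and delegates the rest to \cite{cent}, whereas you spell out the standard Prohorov subsequence argument, including the fixed-time-of-discontinuity subtlety for coordinate projections on the Skorohod space that the paper leaves to the cited machinery.
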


It is in this precise sense that Brownian motion in $\Zp$ is a limit of discrete time random walks.

%%%%%%%%%%%%%%%%%%%%%%%%%%%%%%%%%%%%%%%%%%%%%%%%%%%%%%%%%%%%%%%%%%%%%%
%%%%%%%%%%%%%%%%%%%%%%%%%%%%%%%%%%%%%%%%%%%%%%%%%%%%%%%%%%%%%%%%%%%%%%
%%%%%%%%%%%%%%%%%%%%%%%%%%%%%%%%%%%%%%%%%%%%%%%%%%%%%%%%%%%%%%%%%%%%%%
%%%%%%%%%%%%%%%%%%%%%%%%%%%%%%%%%%%%%%%%%%%%%%%%%%%%%%%%%%%%%%%%%%%%%%
%%%%%%%%%%%%%%%%%%%%%%%%%%%%%%%%%%%%%%%%%%%%%%%%%%%%%%%%%%%%%%%%%%%%%%
%%%%%%%%%%%%%%%%%%%%%%%%%%%%%%%%%%%%%%%%%%%%%%%%%%%%%%%%%%%%%%%%%%%%%%
%%%%%%%%%%%%%%%%%%%%%%%%%%%%%%%%%%%%%%%%%%%%%%%%%%%%%%%%%%%%%%%%%%%%%%
%%%%%%%%%%%%%%%%%%%%%%%%%%%%%%%%%%%%%%%%%%%%%%%%%%%%%%%%%%%%%%%%%%%%%%
%%%%%%%%%%%%%%%%%%%%%%%%%%%%%%%%%%%%%%%%%%%%%%%%%%%%%%%%%%%%%%%%%%%%%%
%%%%%%%%%%%%%%%%%%%%%%%%%%%%%%%%%%%%%%%%%%%%%%%%%%%%%%%%%%%%%%%%%%%%%%
%%%%%%%%%%%%%%%%%%%%%%%%%%%%%%%%%%%%%%%%%%%%%%%%%%%%%%%%%%%%%%%%%%%%%%
%%%%%%%%%%%%%%%%%%%%%%%%%%%%%%%%%%%%%%%%%%%%%%%%%%%%%%%%%%%%%%%%%%%%%%

\section{The limiting processes}\label{Sec:TheLimitingProcess}

%%%%%%%%%%%%%%%%%%%%%%%%%%%%%%%%%%%%%%%%%%%%%%%%%%%%%%%%%%%%%%%%%%%%%%
%%%%%%%%%%%%%%%%%%%%%%%%%%%%%%%%%%%%%%%%%%%%%%%%%%%%%%%%%%%%%%%%%%%%%%
%%%%%%%%%%%%%%%%%%%%%%%%%%%%%%%%%%%%%%%%%%%%%%%%%%%%%%%%%%%%%%%%%%%%%%
%%%%%%%%%%%%%%%%%%%%%%%%%%%%%%%%%%%%%%%%%%%%%%%%%%%%%%%%%%%%%%%%%%%%%%
%%%%%%%%%%%%%%%%%%%%%%%%%%%%%%%%%%%%%%%%%%%%%%%%%%%%%%%%%%%%%%%%%%%%%%
%%%%%%%%%%%%%%%%%%%%%%%%%%%%%%%%%%%%%%%%%%%%%%%%%%%%%%%%%%%%%%%%%%%%%%
%%%%%%%%%%%%%%%%%%%%%%%%%%%%%%%%%%%%%%%%%%%%%%%%%%%%%%%%%%%%%%%%%%%%%%
%%%%%%%%%%%%%%%%%%%%%%%%%%%%%%%%%%%%%%%%%%%%%%%%%%%%%%%%%%%%%%%%%%%%%%
%%%%%%%%%%%%%%%%%%%%%%%%%%%%%%%%%%%%%%%%%%%%%%%%%%%%%%%%%%%%%%%%%%%%%%
%%%%%%%%%%%%%%%%%%%%%%%%%%%%%%%%%%%%%%%%%%%%%%%%%%%%%%%%%%%%%%%%%%%%%%
%%%%%%%%%%%%%%%%%%%%%%%%%%%%%%%%%%%%%%%%%%%%%%%%%%%%%%%%%%%%%%%%%%%%%%
%%%%%%%%%%%%%%%%%%%%%%%%%%%%%%%%%%%%%%%%%%%%%%%%%%%%%%%%%%%%%%%%%%%%%%

%%%%%%%%%%%%%%%%%%%%%%%%%%%%%%%%%%%%%%%%%%%%%%%%%%%%%%%%%%%%%%%%%%%%%%
%%%%%%%%%%%%%%%%%%%%%%%%%%%%%%%%%%%%%%%%%%%%%%%%%%%%%%%%%%%%%%%%%%%%%%
%%%%%%%%%%%%%%%%%%%%%%%%%%%%%%%%%%%%%%%%%%%%%%%%%%%%%%%%%%%%%%%%%%%%%%

\subsection{Path Spaces}

The current work involves the construction and convergence of probability measures on a space of paths $\Omega(I)$ with state space $\mathcal S$ equal to either a finite Abelian group, $\Qp$, or a ball in $\Qp$, and domain $I$ equal to $[0, \infty)$ or $\mathds N_0$, the natural numbers with $0$.  There is a general procedure for constructing such measures.  In the case that $I$ is the interval $[0,\infty)$ and $\mathcal S$ is $\Qp$ or a ball in $\Qp$, the fundamental solution $\rho$ to an analogue of the heat equation in $\mathcal S$ gives for each $t$ in $[0, \infty)$ a probability density function $\rho(t, \cdot)$.  The indexed collection $(\rho(t, \cdot))_{t>0}$ is a convolution semigroup of probability density functions.

Any finite sequence $h$ of pairs of the form \[h = ((t_0, U_0), (t_1, U_1), \dots, (t_n, U_n)),\] where the sequence $(t_0, t_1, \dots, t_n)$ is a strictly increasing finite sequence in $[0, \infty)$, $t_0$ is equal to $0$, and each $U_i$ is a Borel subset of $\mathcal S$, is a \emph{history} for paths in $\mathcal S$.  The sequence $(0, t_1, \dots, t_n)$ is the \emph{epoch} for $h$ and the sequence $(U_i)_{i\in\{0, 1, \dots, n\}}$ is the \emph{route} for $h$. Take $\ell(h)$ to be $n$, the \emph{length of the history}.  For each $i$ in $\{0, 1, \dots, n\}$, associate an epoch $e_h$ and a route $U_h$ with a given history $h$ by writing \[e_h(i) = t_i \quad \text{and} \quad U_h(i) = U_i.\]  

Take $H(I, \mathcal S)$ to be the set of all histories for $\mathcal S$-valued paths with domain $I$.  On specifying a path space $\Omega(I)$, take $\C$ to be defined for any $h$ in $H(I, \mathcal S)$ by \begin{align*}\C(h) &= \left\{\omega \in \Omega(I)\colon \forall i\in \mathds N_0\cap [0, \ell(h)], \; \omega(e_h(i)) \in U_h(i)\right\}.\end{align*}  For any history $h$, $\C(h)$ is a \emph{simple cylinder set}.  The set $\C(H(I, \mathcal S))$ is the \emph{set of all simple cylinder sets of $\Omega(I)$} and is a $\pi$-system, but not an algebra. The \emph{set of cylinder sets} is the $\sigma$-algebra generated by $\C(H(I, \mathcal S))$---its elements are the \emph{cylinder sets}.  

The Kolmogorov extension theorem guarantees that there is a measure $\Pb$ on the cylinder sets of $F([0, \infty)\colon \mathcal S)$, the space of all paths from $[0, \infty)$ to $\mathcal S$, so that for any history $h$, if $U_h(0)$ contains $0$, then%
\begin{align}\label{Framework:EQ:FormulaFDDensity}
\Pb(\C(h)) = \int_{U_h(1)}\cdots \int_{U_h(\ell(h))} \prod_{i=1}^{\ell(h)}\rho(e_h(i)-e_h(i-1), x_i-x_{i-1})\,{\rm d}\mu(x_1)\cdots{\rm d}\mu(x_{\ell(h)}),
\end{align}
 and if $U_h(0)$ does not contain $0$, then $\Pb(\C(h))$ is equal to $0$.  In all current settings, certain moment estimates for $\Pb$ will guarantee that there is a version of the stochastic process $(F([0, \infty)\colon \mathcal S), \Pb, Y)$ with sample paths in $D([0, \infty)\colon \mathcal S)$ \cite{bil1, cent}.  In the case when $I$ is $\mathds N_0$ and $\mathcal S$ is a discrete space, the measure $\mu$ will be a counting measure, or weighted counting measure, and the integrals will be sums.

The probabilities that are given by \eqref{Framework:EQ:FormulaFDDensity} are the \emph{finite dimensional distributions} for the stochastic process $(D([0, \infty)\colon \mathcal S), \Pb, Y)$.  It is frequently useful to specify the finite dimensional distributions for a stochastic process by using the properties of stochastic processes before knowing that there is, indeed, a stochastic process with the given finite dimensional distributions.  Refer to the construction of the finite dimensional distributions for a stochastic process as the construction of the \emph{abstract stochastic process} $\tilde{Y}$ and the construction of the law for a random variable without actually specifying its domain as the construction of an \emph{abstract random variable} $\tilde{X}$.  A model for the abstract random variable $\tilde{X}$ is a random variable with the same law as $\tilde{X}$.  A model for the abstract stochastic process $\tilde{Y}$ is a stochastic process with the same finite dimensional distributions as $\tilde{Y}$.  Distinguish probabilities associated with abstract random variables and stochastic processes from those associated with their models by writing ${\rm Prob}(\tilde{X}\in A)$ for the probability that $\tilde{X}$ takes a value in some Borel set $A$.  Use an analogous notation for $\tilde{Y}$.

%%%%%%%%%%%%%%%%%%%%%%%%%%%%%%%%%%%%%%%%%%%%%%%%%%%%%%%%%%%%%%%%%%%%%%
%%%%%%%%%%%%%%%%%%%%%%%%%%%%%%%%%%%%%%%%%%%%%%%%%%%%%%%%%%%%%%%%%%%%%%
%%%%%%%%%%%%%%%%%%%%%%%%%%%%%%%%%%%%%%%%%%%%%%%%%%%%%%%%%%%%%%%%%%%%%%

\subsection{Brownian motion in $\mathds Q_p$}

The field of $p$-adic numbers $(\Qp, +, \times)$ is the completion of the rational numbers with respect to the $p$-adic absolute value, $|\cdot|$.  For each integer $k$, denote by $\BB(k)$ and $\SS(k)$ the sets \[\BB(k) = \{x\in\mathds Q_p\colon |x|_p \leq p^k\} \quad \text{and}\quad \SS(k) = \{x\in\mathds Q_p\colon |x|_p = p^k\},\] and by $\mathds Z_{p}$ the \emph{ring of integers}, the set $\BB(0)$.  Take $\mu$ to be the unique Haar measure on the locally compact Abelian group $(\mathds Q_p, +)$ that gives $\mathds Z_p$ unit measure.

For any $x$ in $\mathds Q_{p}$, there is a unique function $a_x$ with \begin{equation}\label{Def:ax}a_x\colon\mathds Z\to\{0, 1, 2, \dots, p-1\}\quad \text{and}\quad  x = \sum_{k\in\mathds Z}a_x(k)p^k.\end{equation}  Denote by $\{x\}$ the \emph{fractional part of $x$}, where \[\{x\} = \sum_{k<0} a_{x}(k)p^k,\] and by $\chi$ the additive rank 0 character on $\Qp$ that is given by \[\chi(x) = e^{2\pi{\sqrt{-1}}\{x\}}.\]  

The Pontryagin dual of $\Qp$ is again $\Qp$, and every character on $\Qp$ can be written as $\chi(z\,\cdot\,)$ for some $z$ in $\Qp$, where for any $y$ in $\Qp$, \[\chi(z\,\cdot\,)(y) = \chi(zy).\] The Fourier transform $\mathcal F$ and inverse Fourier transform $\mathcal F^{-1}$ are unitary operators on $L^2(\mathds Q_{p})$ that are given for any $f$ in $L^2(\mathds Q_p)\cap L^1(\mathds Q_p)$ by \[(\mathcal Ff)(x) = \int_{\mathds Q_{p}}\chi(-xy)f(y)\,{\rm d}\mu\!\(y\) \quad\text{and}\quad (\mathcal F^{-1}f)(y) = \int_{\mathds Q_{p}}\chi(xy)f(x)\,{\rm d}\mu(x).\]

Denote by $SB(\Qp)$ the Schwartz-Bruhat space of compactly supported, locally constant, $\mathds C$-valued functions.  Take ${\mathcal M}$ to be the multiplication operator that acts on $SB(\Qp)$ by \[({\mathcal M}f)(x) = |x|^bf(x).\] Denote by $\Delta^\prime_b$ the unique self-adjoint extension of the essentially self-adjoint operator that acts on any $f$ in $SB(\Qp)$ by \[(\Delta^\prime_b f)(x) = \big(\F^{-1}{\mathcal M}\F f\big)\!(x).\]  

For any $\mathds C$-valued function $g$ with domain $(0, \infty)\times \Qp$ and any $t$ in $(0, \infty)$, take $g_t$ to be the function that is defined for every $x$ in $\mathds Q_p$ by \[g_t(x) = g(t,x).\]  Denote by $\mathcal D(\Delta_b)$ the set of all such $g$ so that for all $t$ in $(0, \infty)$, $g_t$ is in the domain of $\Delta^\prime_b$.  Take $\Delta_b$ to be the \emph{Vladimirov operator with exponent $b$} that acts on any $g$ in $\mathcal D(\Delta_b)$ by \[(\Delta_b g)(t,x) =  (\Delta_b g_t)(x).\]  Similarly extend the Fourier transform to act on $\mathds C$-valued functions on $(0, \infty)\times \Qp$ that are square integrable for each positive $t$ and the operator $\frac{{\rm d}}{{\rm d}t}$ to act on $\mathds C$-valued functions on $(0, \infty)\times \Qp$ that for any $x$ in $\mathds Q_p$ are differentiable in the $t$ variable.

Take $D$, the diffusion coefficient, to be any positive real number.  Denote by $\rho$ the fundamental solution to \eqref{Intro:HeatEquation}, which for any $(t,x)$ in $(0, \infty)\times\mathds Q_p$ is given by \begin{equation}\label{LimProc:Eq:QpDiff}\rho(t,x) = \left(\mathcal F^{-1}\e^{-D t|\cdot|^b}\right)\!(x).\end{equation}  For any positive $t$, the Fourier transform of $\rho(t, \cdot)$ is the function $\phi(t, \cdot)$ that is given by \begin{equation}\label{LimProc:Eq:QppdfChar}\phi(t,x) = \e^{-D t|x|^b},\end{equation} and so $\rho$ is given by 
\begin{equation}\label{LimProc:Eq:Qppdf}
\rho(t,x) = \sum_{k\in \mathds Z} p^k\left(\e^{-D tp^{kb}} - \e^{-D tp^{(k+1)b}}\right) \mathds 1_{\BB(-k)}(x).
\end{equation}  

Follow Varadarajan \cite{Varadarajan:LMP:1997} to see that there is a measure $\Pb$ on the space $D([0, \infty)\colon\Qp)$ that gives rise to the finite dimensional distributions given by \eqref{Framework:EQ:FormulaFDDensity}.  The triple $(D([0, \infty)\colon\Qp), \Pb, Y)$ is a $p$-adic Brownian motion.

%%%%%%%%%%%%%%%%%%%%%%%%%%%%%%%%%%%%%%%%%%%%%%%%%%%%%%%%%%%%%%%%%%%%%%
%%%%%%%%%%%%%%%%%%%%%%%%%%%%%%%%%%%%%%%%%%%%%%%%%%%%%%%%%%%%%%%%%%%%%%
%%%%%%%%%%%%%%%%%%%%%%%%%%%%%%%%%%%%%%%%%%%%%%%%%%%%%%%%%%%%%%%%%%%%%%

\subsection{The $\mathds Z_p$ setting}

 Take $SB(\Zp)$ to be the locally constant functions on $\Zp$.  Define the function $\sharp$ from $SB(\Zp)$ to $SB(\Qp)$ by \[\sharp f(x) = \begin{cases}f(x) &\text{ if } x\in \Zp\\0 &\text{ otherwise.}\end{cases}\]  For any function $f$ that acts on $\Qp$ and any subset $X$ of $\Qp$, denote by $f\big|_{X}$ the restriction of $f$ to $X$.  Vladimirov and Kochubei both worked in the slightly more general setting of $p$-adic balls \cite{Vlad90}.  In the context of $\Zp$, define $\Delta^\prime_{b,0}$ to act on any $f$ in $SB(\Zp)$ by \[(\Delta^\prime_{b,0}f)(x) = (\Delta_b \circ \sharp f)\Big|_{\Zp}(x).\]  The \emph{Vladimirov-Kochubei operator} $\Delta_{b,0}$ is the self-adjoint closure of $\Delta^\prime_{b,0}$.  Kochubei gave a probabilistic interpretation of this operator and studied its properties \cite{Koch:UMJ:2018}.  The current section specializes to the ball $\Zp$, but includes a general, constant, diffusion coefficient $D$.
 
The Pontryagin dual of $\Zp$ is the discrete group $\Qp\slash\Zp$.  Take $\Gdmap{\cdot}$ to be the quotient map from $\Qp$ to $\Qp\slash\Zp$ that is given for each $x$ in $\Qp$ by \[\Gdmap{x} = x + \Zp.\]  Define the absolute value $|\cdot|$ on $\Qp\slash\Zp$ by \[|\Gdmap{x}| = \begin{cases}|x| &\mbox{if }\Gdmap{x}\ne \Gdmap{0}\\0 &\mbox{if }\Gdmap{x} = \Gdmap{0}.\end{cases}\] %
Take $\mu_0$ to be the counting measure on $\Qp\slash\Zp$, which is the unique Haar measure that gives $\Zp$ unit measure. %
Use the canonical inclusion map that takes $\Zp$ into $\Qp$ to define the dual pairing $\langle\cdot, \cdot\rangle$.  Namely, \[\langle\cdot, \cdot\rangle \colon \Zp \times (\Qp\slash\Zp) \to \mathds S^1\quad \text{by}\quad \langle x, \Gdmap{y}\rangle = \chi(xy).\] For any $(\Gdmap{x}, w, y)$ in $(\Qp\slash\Zp)\times \Zp \times \Zp$, the product $wy$ is in $\Zp$, and so \begin{align*}\chi((x+w)y)= \chi(xy)\chi(w y) = \chi(xy).\end{align*} The definition of the pairing is, therefore, independent of the choice of representative.

Define the Fourier transform $\F_0$ that takes $L^2(\Zp)$ to $L^2(\Qp\slash\Zp)$ and the inverse Fourier transform $\F^{-1}_0$ that takes $L^2(\Qp\slash\Zp)$ to $L^2(\Zp)$ to be given for any $f$ in $L^2(\Zp)$ and any $\tilde{f}$ in $L^2(\Qp\slash\Zp)$ by \begin{align*}(\F_0f)(y) = \int_{\Zp}\langle -x,\y\rangle f(x)\,{\rm d}\mu(x)
\quad \text{and} \quad (\F_0^{-1}\tilde{f})(x) = \int_{\Qp\slash\Zp}\langle x,\y\rangle \tilde{f}(\y)\,{\rm d}\mu_0(\y).\end{align*}

Take $\beta$ to be the quantity \begin{equation}\label{beta}\beta = \frac{p^{b+1}-1}{p^b(p-1)}\end{equation} and $\M_{b,0}$ to be the multiplication operator that acts on any compactly supported function $f$ with domain $\Qp\slash\Zp$ by \[(\M_{b,0}f)(\y) = (|\y|^b - \beta^{-1})f(\y).\]  The Vladimirov-Kochubei operator is the self-adjoint closure of the operator that is given for any function $f$ in $SB(\Zp)$ by \[(\Delta_{b,0}f)(x) = (\F^{-1}\M_{b,0}\F f)(x).\]  The diffusion equation in $\Zp$ is the equation given by \eqref{Intro:HeatEquation}, but with the new operator $\Delta_{b,0}$ replacing $\Delta_b$.  The Fourier transform of the fundamental solution $\rho$ to the diffusion equation on $\Zp$ is $\phi$, where \begin{equation}\label{Limiting:ZP:CharFun}\phi(t,\y) = \e^{-D\left(|\y|^b - \beta^{-1}\right)t}.\end{equation} The ordered collection of functions $(\rho(t,\cdot))_{t>0}$ forms a convolution semigroup of probability density functions on $\Zp$ that gives rise to a probability measure $\Pb$ on $D([0,\infty)\colon \Zp)$ and the triple $(D([0,\infty)\colon \Zp), \Pb, Y)$ is a Brownian motion in $\Zp$.

 %%%%%%%%%%%%%%%%%%%%%%%%%%%%%%%%%%%%%%%%%%%%%%%%%%%%%%%%%%%%%%%%%%%%%%
%%%%%%%%%%%%%%%%%%%%%%%%%%%%%%%%%%%%%%%%%%%%%%%%%%%%%%%%%%%%%%%%%%%%%%
%%%%%%%%%%%%%%%%%%%%%%%%%%%%%%%%%%%%%%%%%%%%%%%%%%%%%%%%%%%%%%%%%%%%%%
%%%%%%%%%%%%%%%%%%%%%%%%%%%%%%%%%%%%%%%%%%%%%%%%%%%%%%%%%%%%%%%%%%%%%%
%%%%%%%%%%%%%%%%%%%%%%%%%%%%%%%%%%%%%%%%%%%%%%%%%%%%%%%%%%%%%%%%%%%%%%
%%%%%%%%%%%%%%%%%%%%%%%%%%%%%%%%%%%%%%%%%%%%%%%%%%%%%%%%%%%%%%%%%%%%%%
%%%%%%%%%%%%%%%%%%%%%%%%%%%%%%%%%%%%%%%%%%%%%%%%%%%%%%%%%%%%%%%%%%%%%%
%%%%%%%%%%%%%%%%%%%%%%%%%%%%%%%%%%%%%%%%%%%%%%%%%%%%%%%%%%%%%%%%%%%%%%
%%%%%%%%%%%%%%%%%%%%%%%%%%%%%%%%%%%%%%%%%%%%%%%%%%%%%%%%%%%%%%%%%%%%%%
%%%%%%%%%%%%%%%%%%%%%%%%%%%%%%%%%%%%%%%%%%%%%%%%%%%%%%%%%%%%%%%%%%%%%%
%%%%%%%%%%%%%%%%%%%%%%%%%%%%%%%%%%%%%%%%%%%%%%%%%%%%%%%%%%%%%%%%%%%%%%
%%%%%%%%%%%%%%%%%%%%%%%%%%%%%%%%%%%%%%%%%%%%%%%%%%%%%%%%%%%%%%%%%%%%%%

\section{The discrete settings}\label{Sec:TheDiscreteSettings}

%%%%%%%%%%%%%%%%%%%%%%%%%%%%%%%%%%%%%%%%%%%%%%%%%%%%%%%%%%%%%%%%%%%%%%
%%%%%%%%%%%%%%%%%%%%%%%%%%%%%%%%%%%%%%%%%%%%%%%%%%%%%%%%%%%%%%%%%%%%%%
%%%%%%%%%%%%%%%%%%%%%%%%%%%%%%%%%%%%%%%%%%%%%%%%%%%%%%%%%%%%%%%%%%%%%%
%%%%%%%%%%%%%%%%%%%%%%%%%%%%%%%%%%%%%%%%%%%%%%%%%%%%%%%%%%%%%%%%%%%%%%
%%%%%%%%%%%%%%%%%%%%%%%%%%%%%%%%%%%%%%%%%%%%%%%%%%%%%%%%%%%%%%%%%%%%%%
%%%%%%%%%%%%%%%%%%%%%%%%%%%%%%%%%%%%%%%%%%%%%%%%%%%%%%%%%%%%%%%%%%%%%%
%%%%%%%%%%%%%%%%%%%%%%%%%%%%%%%%%%%%%%%%%%%%%%%%%%%%%%%%%%%%%%%%%%%%%%
%%%%%%%%%%%%%%%%%%%%%%%%%%%%%%%%%%%%%%%%%%%%%%%%%%%%%%%%%%%%%%%%%%%%%%
%%%%%%%%%%%%%%%%%%%%%%%%%%%%%%%%%%%%%%%%%%%%%%%%%%%%%%%%%%%%%%%%%%%%%%
%%%%%%%%%%%%%%%%%%%%%%%%%%%%%%%%%%%%%%%%%%%%%%%%%%%%%%%%%%%%%%%%%%%%%%
%%%%%%%%%%%%%%%%%%%%%%%%%%%%%%%%%%%%%%%%%%%%%%%%%%%%%%%%%%%%%%%%%%%%%%
%%%%%%%%%%%%%%%%%%%%%%%%%%%%%%%%%%%%%%%%%%%%%%%%%%%%%%%%%%%%%%%%%%%%%%

%%%%%%%%%%%%%%%%%%%%%%%%%%%%%%%%%%%%%%%%%%%%%%%%%%%%%%%%%%%%%%%%%%%%%%
%%%%%%%%%%%%%%%%%%%%%%%%%%%%%%%%%%%%%%%%%%%%%%%%%%%%%%%%%%%%%%%%%%%%%%
%%%%%%%%%%%%%%%%%%%%%%%%%%%%%%%%%%%%%%%%%%%%%%%%%%%%%%%%%%%%%%%%%%%%%%

\subsection{The discrete state spaces}

Take $\Gm$ to be the discrete group $\Zp\slash p^m\Zp$ and $\Gmdual$ its Pontryagin dual, the discrete group $p^{-m}\Zp\slash\Zp$.  It will be useful to write the elements of these groups in a way that both clearly indicates their membership and also their relationship to the elements of $\mathds Q_p$.  For this reason, define the projections $\Gmap{\cdot}_m$ and $\Gdmap{\cdot}_m$ for any $x$ in $\mathds Z_p$ and for any $y$ in $p^{-m}\Zp$ respectively by \[\x_m = x + p^m\Zp \quad \text{and} \quad \y_m = y + \Zp,\] so that \[\Gmap{\cdot}_m \colon \Zp \to \Gm = \Zp\slash p^m\Zp \quad \text{and} \quad \Gdmap{\cdot}_m\colon p^{-m}\Zp \to \Gmdual = p^{-m}\Zp\slash\Zp.\] The $p$-adic absolute value induces a norm on both $\Gm$ and on $\Gmdual$, respectively, by \[\begin{cases}\big|\x_m\big|_m = |x| & \text{if } \x_m \ne \Gmap{0}_m\\\big|\Gmap{0}_m\big|_m = 0 & \end{cases} \quad \text{and} \quad \begin{cases}\big|\y_m\big|_m = |y| & \text{if } \y_m \ne \Gdmap{0}_m\\\big|\Gdmap{0}_m\big|_m = 0. & \end{cases}\] Henceforth, suppress the $m$ in the notation whenever it is unambiguous to do so.

For any $\ell$ in $\{1, \dots, m\}$, take $\Bm(\ell)$ and $\Sm(\ell)$, respectively, to be the ball and circle of radius $p^{\ell-m}$ in $\Gm$ with center $p^m\Zp$, and take \[\Bm(0) = \Sm(0) = p^m\Zp.\] Take $\mu_m$ to be the Haar measure on $\Gm$ that is normalized to give $\Gm$ unit measure, so that for any $k$ in $\{1, 2, 3, \dots, m\}$, the volume of balls and circles are given by\begin{align}\label{DiscreteSettings:Eq:BallA}\begin{cases}\Vol(\Bm(k)) = p^{k-m} & \text{}\\ \Vol(\Bm(0)) = p^{-m} &  \text{} \end{cases} \quad \text{and} \quad \begin{cases}\Vol(\Sm(k)) = \big(1 - p^{-1}\big)p^{k-m} &  \text{}\\ \Vol(\Sm(0)) = p^{-m}. & \text{}\end{cases}\end{align}

For any $\ell$ in $\{1, \dots, m\}$, take $\Bmdual(\ell)$ and $\Smdual(\ell)$, respectively, to be the ball and circle in $\Gmdual$ of radius $p^\ell$ with center $\Zp$, and \[\Bmdual(0) = \Smdual(0) = \Zp.\] Take $\tilde{\mu}_m$ to be the Haar measure on $\Gmdual$ that is normalized to give $\Zp$ unit measure.  This measure is also the counting measure, so for any $k$ in $\{1, 2, 3, \dots, m\}$, %
\begin{align}\label{DiscreteSettings:Eq:BallB}
\begin{cases}\Vol(\Bmdual(k)) = p^{k} & \text{}\\ \Vol(\Bmdual(0)) = 1 &  \text{} \end{cases} \quad \text{and} \quad \begin{cases}\Vol(\Smdual(k)) = \big(1 - p^{-1}\big)p^{k} &  \text{}\\ \Vol(\Smdual(0)) =1. & \text{}%
\end{cases}
\end{align}

The character $\chi$ on $\mathds Q_p$ induces a dual pairing $\langle \cdot, \cdot\rangle$  on $\Gm\times\Gmdual$ that is given for any $(\x, \y)$ in $\Gm\times\Gmdual$ by \[\langle \x, \y\rangle = \chi(xy).\]  As before, the pairing makes use of a specific choice of representative for the group elements, but since $\chi$ is additive and rank $0$, the pairing is independent of the choice of representative.  The Fourier transform $\F_m$ from $L^2(\Gm)$ to $L^2(\Gmdual)$ and its inverse $\F_m^{-1}$ act, respectively, on any functions $f$ and $g$ by \[(\Fm f)(\y) = \int_{\Gm} \chi(xy)f(\x)\,{\rm d}\mu_m(\x) \quad \text{and} \quad (\Fm^{-1}g)(\x) = \int_{\Gmdual} \chi(-xy)g(\Gdmap{x})\,{\rm d}\tilde{\mu}_m(\y).\]  Both $\Fm$ and $\Fm^{-1}$ are unitary with respect to the $L^2$-norm.

For any set $A$ that is contained in a set $B$, denote by $\mathds 1_A$ the indicator function on $A$, the function that is given by \[\mathds 1(x) = \begin{cases}1 & \text{if } x\in A\\0 & \text{if }x \in B\setminus A.\end{cases}\]  Lemma~\ref{Construction:Eq:indicatori} and Lemma~\ref{Construction:Eq:Dualindicatori} are useful for calculating integrals that involve the dual pairing.  Their proofs follow from the proofs of the corresponding statements in the $\mathds Q_p$ setting.

\begin{lemma}\label{Construction:Eq:indicatori}
For any $i$ in $\{1, \dots, m\}$ and $\y$ in $\Gmdual$,
\begin{align*}
\int_{\Bm(i)}\chi(xy)\,{\rm d}\mu_m(\x) &= \Vol(\Bm(i))\cdot\mathds{1}_{\Bmdual(m - i)}(\y) \quad \text{and} \quad \int_{\Bm(0)}\chi(xy)\,{\rm d}\mu_m(\x) = p^{-m}.
\end{align*}
\end{lemma}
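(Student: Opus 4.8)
The plan is to exploit the fact that each ball $\Bm(i)$ is actually a subgroup of $\Gm$ and that $\x \mapsto \chi(xy)$ restricts to a character of that subgroup, so the integral collapses to the orthogonality relation for characters of a finite Abelian group.

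First I would identify $\Bm(i)$, for $i$ in $\{1, \dots, m\}$, with the subgroup $p^{m-i}\Zp\slash p^m\Zp$ of $\Gm$. A group element $\x$ satisfies $\big|\x\big|_m \le p^{i-m}$ exactly when a representative $x$ lies in $p^{m-i}\Zp$, so $\Bm(i)$ is the image of $p^{m-i}\Zp$ under $\Gmap{\cdot}_m$, a subgroup of order $p^i$ whose volume $p^{i-m}$ agrees with \eqref{DiscreteSettings:Eq:BallA}. Because $\mu_m$ is translation invariant, its restriction to $\Bm(i)$ is a scalar multiple of the Haar measure on this subgroup, and hence $\int_{\Bm(i)}\psi\,{\rm d}\mu_m$ for a character $\psi$ of $\Bm(i)$ equals $\Vol(\Bm(i))$ when $\psi$ is trivial and $0$ otherwise.

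Since $\x \mapsto \chi(xy)$ is a character of $\Gm$, its restriction to $\Bm(i)$ is a character of $\Bm(i)$, and the computation reduces to deciding when this restriction is trivial. This triviality criterion is the heart of the matter, and it rests on the property that $\chi(z) = 1$ if and only if $z \in \Zp$ together with the ultrametric estimate $|xy| = |x|\,|y|$. The restriction is trivial precisely when $xy \in \Zp$ for every $x$ in $p^{m-i}\Zp$; since the largest such $|x|$ is $p^{i-m}$, this holds if and only if $p^{i-m}|y| \le 1$, i.e. $|y| \le p^{m-i}$, which is exactly the assertion $\y \in \Bmdual(m-i)$. This produces the factor $\mathds 1_{\Bmdual(m-i)}(\y)$. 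In the complementary range $|y| \ge p^{m-i+1}$, the choice $x = p^{m-i}$ gives $|xy| \ge p > 1$, so $\chi(xy) \ne 1$, the restricted character is nontrivial, and the integral vanishes.

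The degenerate case $i = 0$ is immediate: $\Bm(0) = \{p^m\Zp\}$ is the single identity element of $\Gm$, the integrand equals $\chi(0) = 1$ there, and the integral is the measure of one point, $\Vol(\Bm(0)) = p^{-m}$. The only step that genuinely requires care is the triviality criterion, where one must track the inequality relating the radius $i$ of $\Bm(i)$ to the radius $m-i$ of the dual ball $\Bmdual(m-i)$ exactly; the remainder is the standard finite-group orthogonality argument, which is why the proof parallels the corresponding computation in $\Qp$ referenced in the text.
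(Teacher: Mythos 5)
Your proof is correct and matches the argument the paper intends: the paper omits the proof, saying only that it follows the corresponding computation in the $\Qp$ setting, and that computation is precisely the character-orthogonality argument you give (identify $\Bm(i)$ with the subgroup $p^{m-i}\Zp\slash p^m\Zp$, note that $\x\mapsto\chi(xy)$ restricts to a character there, show it is trivial exactly when $|\y|\le p^{m-i}$, i.e.\ $\y\in\Bmdual(m-i)$, and invoke vanishing of nontrivial character sums otherwise), together with the same singleton computation for $\Bm(0)$. One cosmetic quibble: $|xy|=|x|\,|y|$ is multiplicativity of the $p$-adic absolute value, not an ultrametric estimate.
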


\begin{lemma}\label{Construction:Eq:Dualindicatori}
For any $i$ in $\{1, \dots, m\}$ and $\x$ in $\Gm$, 
\begin{align*}
\int_{\Bmdual(i)}\chi(xy)\,{\rm d}\tilde{\mu}_m(\y) = \Vol(\Bmdual(i))\cdot\mathds{1}_{\Bm(m - i)}(\x) \quad \text{and} \quad \int_{\Bmdual(0)}\chi(xy)\,{\rm d}\tilde{\mu}_m(\y) = p^{-m}.
\end{align*}
\end{lemma}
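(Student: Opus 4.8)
The plan is to read the integrand $\y\mapsto\chi(xy)$, for a fixed $\x$ in $\Gm$, as a character of the finite Abelian group $\Bmdual(i)$ and then to invoke the orthogonality of characters, mirroring the $\Qp$ computation of the Fourier transform of the indicator of a ball that the excerpt cites as the model. First I would fix a representative $x$ in $\Zp$ for $\x$ and verify that $\y\mapsto\chi(xy)$ is a well-defined function on $\Gmdual$: replacing the representative $y$ of $\y$ by $y+z$ with $z$ in $\Zp$ multiplies $\chi(xy)$ by $\chi(xz)$, which equals $1$ because $x$ and $z$ both lie in $\Zp$, so $xz$ lies in $\Zp$ and $\chi$ is trivial there. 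The same additivity of $\chi$ shows that the restriction of $\y\mapsto\chi(xy)$ to $\Bmdual(i)$ is a homomorphism into $\SS^1$, hence a character.

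Next I would apply the standard orthogonality relation: integrating a character of a finite Abelian group against the counting measure returns the order of the group when the character is trivial and $0$ otherwise. Since $\tilde\mu_m$ is the counting measure and $\Vol(\Bmdual(i))$ equals the order $p^i$ of $\Bmdual(i)$ by \eqref{DiscreteSettings:Eq:BallB}, it remains only to determine for which $\x$ the character $\y\mapsto\chi(xy)$ is trivial on $\Bmdual(i)$. Writing $\Bmdual(i)=p^{-i}\Zp\slash\Zp$, triviality means $\chi(xy)=1$, equivalently $xy$ in $\Zp$, for every $y$ in $p^{-i}\Zp$; choosing $y$ of maximal size $|y|=p^i$ shows this is equivalent to $|x|\le p^{-i}$, that is, to $\x$ lying in $\Bm(m-i)$, the ball in $\Gm$ of radius $p^{-i}$. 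This yields the first identity $\int_{\Bmdual(i)}\chi(xy)\,\d\tilde\mu_m(\y)=\Vol(\Bmdual(i))\cdot\mathds 1_{\Bm(m-i)}(\x)$. The degenerate case $\Bmdual(0)=\{\Gdmap{0}\}$ is immediate by direct evaluation, since the domain is the singleton consisting of the identity.

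I expect no serious obstacle here, as the argument is routine harmonic analysis on finite groups; the only real care is bookkeeping. One must track how a ball of radius $p^i$ in the dual $\Gmdual$ forces the complementary radius $p^{-i}$ on $\Gm$, and hence the index shift to $\Bm(m-i)$, and confirm that the normalization making $\tilde\mu_m$ the counting measure is precisely what identifies $\Vol(\Bmdual(i))$ with the order of $\Bmdual(i)$, so that orthogonality outputs the stated volume factor. A symmetric argument, interchanging the roles of $\Gm$ and $\Gmdual$ and of the measures $\mu_m$ and $\tilde\mu_m$, proves Lemma~\ref{Construction:Eq:indicatori}.
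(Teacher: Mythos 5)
Your proof of the first identity is correct and complete: the well-definedness of $\y\mapsto\chi(xy)$ on $\Gmdual$, the identification of its restriction to the subgroup $\Bmdual(i)=p^{-i}\Zp\slash\Zp$ as a character, orthogonality of characters of a finite Abelian group, and the bookkeeping that equates triviality of that character with $|x|\leq p^{-i}$, hence with $\x\in\Bm(m-i)$, are all exactly right. It is also in substance the argument the paper points to, since the paper offers no written proof beyond the remark that the statement follows as in the $\Qp$ setting, where the same orthogonality computation gives the Fourier transform of the indicator of a ball.

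The problem is the degenerate case, which you wave through as ``immediate by direct evaluation.'' Performing that evaluation exposes an inconsistency: $\Bmdual(0)=\{\Gdmap{0}\}$ is a single point, $\tilde{\mu}_m$ is the counting measure, so by \eqref{DiscreteSettings:Eq:BallB} that point has measure $\Vol(\Bmdual(0))=1$, while the integrand there is $\chi(x\cdot 0)=1$; hence
\begin{equation*}
\int_{\Bmdual(0)}\chi(xy)\,{\rm d}\tilde{\mu}_m(\y) = 1 \neq p^{-m},
\end{equation*}
so the second identity is false as printed. The value $p^{-m}$ is evidently copied from Lemma~\ref{Construction:Eq:indicatori}, where it is correct because $\mu_m$ normalizes $\Gm$ to total mass one and therefore gives each point mass $p^{-m}$; under the counting-measure normalization of $\tilde{\mu}_m$ the correct value is $1$. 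This is confirmed by the way the lemma is used in the proof of Proposition~\ref{Construction:Prop:PDFcalculation}: the term $(1-\phi(1)^n)\int_{\Bmdual(0)}\chi(xy)\,\d\y$ there must equal the $i=0$ summand of \eqref{Construction:PropEq:PDFcalculation}, namely $(\phi(0)^n-\phi(1)^n)p^0\,\mathds{1}_{\Bm(m)}(\x)=1-\phi(1)^n$ (using $\phi(0)=1$ and $\Bm(m)=\Gm$), which forces the integral over $\Bmdual(0)$ to be $1$. So your method is the right one, but a proof that actually carried out the ``immediate'' step would have contradicted the statement being proved; you needed to flag the typo and record the corrected value $1$ (equivalently, $\Vol(\Bmdual(0))$), which is what the rest of the paper in fact uses.
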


%%%%%%%%%%%%%%%%%%%%%%%%%%%%%%%%%%%%%%%%%%%%%%%%%%%%%%%%%%%%%%%%%%%%%%
%%%%%%%%%%%%%%%%%%%%%%%%%%%%%%%%%%%%%%%%%%%%%%%%%%%%%%%%%%%%%%%%%%%%%%
%%%%%%%%%%%%%%%%%%%%%%%%%%%%%%%%%%%%%%%%%%%%%%%%%%%%%%%%%%%%%%%%%%%%%%

%%%%%%%%%%%%%%%%%%%%%%%%%%%%%%%%%%%%%%%%%%%%%%%%%%%%%%%%%%%%%%%%%%%%%%
%%%%%%%%%%%%%%%%%%%%%%%%%%%%%%%%%%%%%%%%%%%%%%%%%%%%%%%%%%%%%%%%%%%%%%
%%%%%%%%%%%%%%%%%%%%%%%%%%%%%%%%%%%%%%%%%%%%%%%%%%%%%%%%%%%%%%%%%%%%%%

\subsection{Construction of the $\Gm$-valued processes}

Take $\X^{(m)}$ to be the $\Gm$-valued abstract random variable with probability mass function $\rho_{\X^{(m)}}$ that is constant on each circle in $\Gm$ and that satisfies the equalities
\[ 
\begin{cases} 
      \text{Prob}(\X^{(m)} \in \Sm(\ell) ) = \frac{C_m}{p^{\ell b}} & \text{if } \ell \in \{1,2,\ldots,m\}\\
      \text{Prob}(\X^{(m)} = \Gmap{0}) = 0. &
\end{cases}
\]
To simplify notation, henceforth write $X$ rather than $\X^{(m)}$ unless it is necessary to specify the natural number $m$.   The equality 
\begin{align*}
1 &= \Bigg\{\frac{1}{p} + \frac{1}{p^{2b}} + \cdots + \frac{1}{p^{mb}} \Bigg\} \cdot c_m = \frac{1}{p^{mb}} \cdot \frac{p^{mb}-1}{p^b-1} \cdot c_m
\end{align*}
implies that
\[c_m = \frac{p^{mb}}{p^{mb}-1}\big(p^b-1\big).\]  %
The constancy of $\rho_{X}$ on each circle implies that 
\[ 
\begin{cases} 
      \rho_X(\x) = \frac{1}{\Vol(\Sm(k))}\cdot \frac{c_m}{p^{kb}} & \text{if } \left|\x\right| = p^{-m+k}\\
       \rho_X(\Gmap{0}) = 0.&
\end{cases}
\]

To compress notation, henceforth write ${\rm d}\mu_m(\x)$ and ${\rm d}\tilde{\mu}_m(\y)$ as $\d\x$ and $\d\y$, respectively.  Denote by $\phi_X$ the characteristic function of $X$ and use the decomposition of $\Zp$ into a countable union of circles to obtain the equalities%
\begin{align*}
\phi_X(\y) & = (\Fm\rho_X)(\y)\\
&= \int_{\Gm} \chi(xy)\rho_X(\x)\,\d\x\\
&= \int_{\Gm} \chi(xy)\rho_X(\x)\,\d\x + \sum_{\ell = 1}^m\int_{\Sm(-m+\ell)} \chi(xy)\rho_X(\x)\,\d\x.
\end{align*}
For each $i$ in $\{0, 1, \dots, m\}$, simplify notation by writing \[\rho(i) = \begin{cases}0 &\text{if } i = 0\\\frac{1}{\text{Vol}(\Sm(i))} \cdot \frac{c_m}{p^{i b}} & \text{if }i \in \{1, \dots, m\}\end{cases}\] to obtain for any $\y$ that is not equal to $\Gdmap{0}$ the equalities
\begin{align}\label{Construction:Eq:PhiX}
\phi_X(\y) & = \sum_{i=1}^m\rho(i)\int_{\Sm(i)}\chi(xy)\,\d\x%
\notag\\& = \sum_{i=1}^m \rho(i)\Bigg(\int_{\Bm(i)} \chi(xy)\,\d\x - \int_{\Bm(i)}\chi(xy)\,\d\x\Bigg)\notag\\%\\
 &= -\rho(1) \int_{\Bm(-m)}\chi(xy)\,\d\x + \sum_{i=1}^{m-1}(\rho(i)-\rho(i+1)) \int_{\Bm(i)}\chi(xy)\,\d\x\notag\\& %\\
\qquad\qquad + \rho(m)\int_{\Bm(m)}\chi(xy)\,\d\x.
\end{align}

\begin{proposition}\label{Construction:Prop:PhiFormula}
The characteristic function for $X$, $\phi_X$, is given for any $\y$ not equal to $\Gdmap{0}$ by 
\begin{align}
\phi_X(\y) = 1 - \dfrac{\beta\Big(|\y|^b\big(1+ \frac{1}{p^{mb}}\big) - \beta^{-1}\Big)}{p^{mb}}+\frac{1}{p^{mb}(p^{mb}-1)}\left(1 - \dfrac{\beta|\y|^b}{p^{mb}}\right),
\end{align}
and \[\phi_X(\Gdmap{0}) = 1.\]
\end{proposition}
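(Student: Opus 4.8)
The plan is to start from the Abel-summed expression for $\phi_X$ recorded in \eqref{Construction:Eq:PhiX} and to evaluate each remaining ball integral with Lemma~\ref{Construction:Eq:indicatori}, thereby reducing the whole computation to a single finite geometric sum whose value I then match to the claimed expression. The case $\y = \Gdmap{0}$ is immediate: $\phi_X(\Gdmap{0})$ is the integral of the probability mass function $\rho_X$ against the trivial character, which is just the total mass, namely $1$.

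For $\y \neq \Gdmap{0}$, I would first apply Lemma~\ref{Construction:Eq:indicatori} to replace $\int_{\Bm(i)}\chi(xy)\,\d\x$ by $\Vol(\Bm(i))\,\mathds{1}_{\Bmdual(m-i)}(\y)$ for each $i$ in $\{1,\dots,m\}$ and by $p^{-m}$ for the boundary index $i=0$. Writing $|\y| = p^{j}$ with $j$ in $\{1,\dots,m\}$, the dual ball $\Bmdual(m-i)$ contains $\y$ exactly when $i \le m-j$, so the indicators truncate the sum: every term with $i > m-j$ drops out, and in particular the top boundary term $\rho(m)\int_{\Bm(m)}\chi(xy)\,\d\x$ vanishes because $\mathds{1}_{\Bmdual(0)}(\y)=0$ for $\y\neq\Gdmap{0}$. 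Using $\Vol(\Bm(i)) = p^{i-m}$ from \eqref{DiscreteSettings:Eq:BallA}, this leaves
\[
\phi_X(\y) = -\rho(1)\,p^{-m} + \sum_{i=1}^{m-j}\bigl(\rho(i)-\rho(i+1)\bigr)\,p^{i-m}.
\]

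The key simplification is that $\rho$ is geometric: from the formula for $\rho(i)$ together with $\Vol(\Sm(i)) = (1-p^{-1})p^{i-m}$ one gets $\rho(i) = A\,p^{-i(b+1)}$ with $A = c_m\,p^{m}/(1-p^{-1})$, so that $\rho(i)-\rho(i+1) = A\,p^{-i(b+1)}\bigl(1-p^{-(b+1)}\bigr)$ and each summand is a constant multiple of $p^{-ib}$. The remaining sum is therefore a finite geometric series in $p^{-b}$, which I would evaluate in closed form and combine with the boundary term $-\rho(1)p^{-m}$. Substituting $c_m = p^{mb}(p^{b}-1)/(p^{mb}-1)$ and the definition of $\beta$ from \eqref{beta}, the expression collapses to the single fraction
\[
\phi_X(\y) = \frac{p^{mb} - \beta\,|\y|^{b}}{p^{mb}-1}.
\]
The final step is purely algebraic: expanding the term $\beta\bigl(|\y|^b(1+p^{-mb})-\beta^{-1}\bigr)/p^{mb}$ and the last correction in the statement and collecting the constant and $|\y|^b$ coefficients shows that the claimed three-term expression equals this same fraction, which completes the proof.

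I expect the main obstacle to be bookkeeping rather than anything conceptual. One must correctly track how the indicator $\mathds{1}_{\Bmdual(m-i)}(\y)$ depends on $|\y|$ and hence where the sum truncates, treat the two boundary indices $i=0$ and $i=m$ separately, and carry out the geometric evaluation together with the $c_m$- and $\beta$-substitutions without error. The reconciliation of the clean closed form with the specific grouping in the statement is routine but slightly delicate, since the stated form is deliberately organized to isolate the symbol $|\y|^b - \beta^{-1}$ of the Vladimirov--Kochubei operator, which is what makes it convenient for the convergence arguments later.
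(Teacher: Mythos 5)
Your proposal is correct and follows essentially the same route as the paper's proof: both start from \eqref{Construction:Eq:PhiX}, apply Lemma~\ref{Construction:Eq:indicatori} so that the dual-ball indicators truncate the sum at $i \le m-j$ (with the $i=0$ and $i=m$ boundary terms handled exactly as you describe), evaluate the resulting geometric series in $p^{-b}$, and arrive at the same closed form $\phi_X(\y) = \frac{p^{mb}-\beta|\y|^b}{p^{mb}-1}$ before matching it to the stated three-term expression. The only (inessential) difference is the final reconciliation, where the paper expands $\frac{p^{mb}}{p^{mb}-1}$ as a geometric series and multiplies out, whereas you verify the algebraic identity directly by collecting the constant and $|\y|^b$ coefficients.
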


\begin{proof}
For any $\y$ in $\Gmdual$, either $|\y|$ is equal to $\Gdmap{0}$, or there is a $k$ in $\{1, 2, \dots, m\}$ so that $|\y|$ is equal to $p^{k}$. Since $\phi_X$ is the Fourier transform of a probability density function, it takes on the value $1$ at $\Gdmap{0}$.  Use \eqref{Construction:Eq:PhiX} to obtain the equalities
\begin{align*}
\phi_X(\y) & = -\rho(1)\cdot p^{-m} + \sum_{i=1}^{m-1}\Big\{(\rho(i) - \rho(i+1))\Vol(\Bm(i))\mathds{1}_{\Bmdual(m-i)}(\y)\Big\}\\
 &\qquad + \rho(m) \cdot \Vol(\Bm(m)) \cdot \mathds{1}_{\Gdmap{0}}(\y).
\end{align*} %
For each $i$ in $\{1, 2, \dots, m\}$, %
\begin{align*}
\big(\rho(i) - \rho(i +1)\big) \cdot \Vol\big(\Bm(i)\big)%\\
= \frac{c_m}{p^{ib}}\cdot\frac{p}{p-1}\cdot\frac{p^{b+1} -1}{p^{b+1}}.
\end{align*}
Furthermore,
\begin{align*}
\rho(1) \cdot p^{-m}= \Prob\big(X\in\Sm(1)\big)\cdot\frac{1}{\Vol\big(\Sm(1)\big)}\cdot p^{-m}%\\
= \frac{c_m}{p^b} \cdot \frac{1}{p-1},
\end{align*}
and so
\begin{align*}
\phi_X(\y)&= - \frac{c_m}{p^b} \cdot \frac{1}{p-1} + c_mC(p,b)\Bigg\{\frac{1}{p^b}\cdot \mathds{1}_{\Bmdual(m-1)}(\y) + \frac{1}{p^{2b}} \cdot \mathds{1}_{\Bmdual(m-2)}(\y) \\&\qquad\qquad + \cdots + \frac{1}{p^{i b}} \cdot \mathds{1}_{\Bmdual(m-i)}(y) + \cdots + \frac{1}{p^{(m-1)b}} \cdot \mathds{1}_{\Bmdual(1)}(\y) \Bigg\} \\&\qquad\qquad + \frac{1}{1-\frac{1}{p}} \cdot \frac{c_m}{p^{mb}} \cdot \mathds{1}_{\Bmdual(0)}([y]),
\end{align*}
where $C(p,b)$ denotes the quantity \[C(p,b) = \frac{p}{p-1}\cdot\frac{p^{b+1} -1}{p^{b+1}}.\]

Take $|\y|$ to be equal to $p^k$, so that 
\begin{align*}
\phi_X(\y)&= - \frac{c_m}{p^b} \cdot \frac{1}{p-1} + c_mC(p,b)\cdot \Bigg \{\frac{1}{p^b} + \frac{1}{p^{2b}} + \cdots + \frac{1}{p^{(m-k)b}}\Bigg\}\\
&=\frac{p^{mb}}{p^{mb}-1} \Bigg \{ 1 - \frac{p^{b+1}-1}{p^b(p-1)} \cdot \frac{p^{kb}}{p^{mb}} \Bigg \}=\frac{p^{mb}}{p^{mb}-1} \Bigg \{ 1 - \frac{p^{b+1}-1}{p^b(p-1)} \cdot \frac{|\y|^b}{p^{mb}} \Bigg\}.
\end{align*}
Use the series expansion%
\begin{align*}
\frac{p^{mb}}{p^{mb} - 1} = 1 + \frac{1}{p^{mb}} + \frac{1}{p^{2mb}} + \frac{1}{p^{3mb}} + \cdots
\end{align*} %
to rewrite $\phi_X(\y)$ for any $\y$ not equal to $\Gdmap{0}$ as
\begin{align*}
\phi_X(\y) &= \left(1 + \frac{1}{p^{mb}} + \frac{1}{p^{2mb}} + \frac{1}{p^{3mb}} + \cdots\right)\left(1 - \dfrac{\beta|\y|^b}{p^{mb}}\right)\\
&= 1 - \dfrac{\beta\Big(|\y|^b\big(1+ \frac{1}{p^{mb}}\big) - \beta^{-1}\Big)}{p^{mb}}+\frac{1}{p^{mb}(p^{mb}-1)}\left(1 - \dfrac{\beta|\y|^b}{p^{mb}}\right)
\end{align*}
where $\beta$ is given by \eqref{beta}.
\end{proof}

The Kolmogorov Extension theorem guarantees that there is a measure $\Pb_m^\ast$ on $F(\mathds N_0\colon \Gm)$ and a stochastic process $(F(\mathds N_0\colon \Gm), \Pb_m^\ast, S)$ so that the increments of $S$ are independent and have the same law as $X$.  For any natural number $n$, take $(X_i)$ to be the sequence of increments of this process, so that for each $n$ in $\mathds N_0$, \[S_n = S_0 + X_1 + \cdots + X_n\] and $S_0$ is almost surely equal to $\Gmap{0}$.  Take $\rho(n,\cdot)$ to be the law for $S_n$.  To compress notation, for any $i$ in $\{1, \dots, m\}$, write \[\phi(i) = \phi_X(\y),\quad \text{where}\quad \y = p^i,\] and write $\phi(0)$ rather than $\phi_X(\Gdmap{0})$.

\begin{proposition}\label{Construction:Prop:PDFcalculation}
For any natural number $n$, \begin{equation}\label{Construction:PropEq:PDFcalculation}\rho(n,\x) = \sum_{i=0}^{m-1}\Big(\phi(i)^n - \phi(i+1)^n\Big)p^i\mathds{1}_{\Bm(m-i)}(\x)+ \phi(m)^n\mathds{1}_{\Bm(0)}(\x).\end{equation}
\end{proposition}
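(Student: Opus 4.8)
The plan is to obtain $\rho(n,\cdot)$ by Fourier inversion rather than by iterating the convolution directly. Since $S_n = S_0 + X_1 + \cdots + X_n$ with $S_0 = \Gmap{0}$ almost surely and the increments $X_i$ independent with the common law of $X$, the density $\rho(n,\cdot)$ is the $n$-fold $\mu_m$-convolution of $\rho_X$. Because $\Fm$ turns convolution into pointwise multiplication, the characteristic function of $S_n$ is $\phi_X^{\,n}$, so that $\Fm\rho(n,\cdot) = \phi_X^{\,n}$ and hence
\[
\rho(n,\x) = \big(\Fm^{-1}\phi_X^{\,n}\big)(\x) = \int_{\Gmdual}\chi(-xy)\,\phi_X(\y)^n\,\d\y .
\]
The case $n=0$ (a point mass at $\Gmap{0}$, with transform the constant $1 = \phi_X^{\,0}$) anchors this identity.

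The essential structural fact is that $\phi_X$ is radial: by Proposition~\ref{Construction:Prop:PhiFormula} it depends on $\y$ only through $|\y|$, so $\phi_X(\y)^n = \phi(k)^n$ on each dual circle $\Smdual(k)$. I would split $\Gmdual$ into the disjoint circles $\Smdual(0),\dots,\Smdual(m)$, pull the constant $\phi(k)^n$ out of each piece, and obtain
\[
\rho(n,\x) = \sum_{k=0}^{m}\phi(k)^n\int_{\Smdual(k)}\chi(-xy)\,\d\y .
\]
Writing each circle as a difference of concentric balls, $\Smdual(k)=\Bmdual(k)\setminus\Bmdual(k-1)$ for $k\ge 1$, with $\Smdual(0)=\{\Gdmap{0}\}$ contributing its counting weight $1$, Lemma~\ref{Construction:Eq:Dualindicatori} evaluates the remaining ball integrals as $\int_{\Bmdual(j)}\chi(-xy)\,\d\y = p^{\,j}\mathds{1}_{\Bm(m-j)}(\x)$; the sign in $\chi(-xy)$ is harmless because each $\Bmdual(j)$ is symmetric under $\y\mapsto-\y$.

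Finally I would reorganize $\phi(0)^n I_0 + \sum_{k=1}^{m}\phi(k)^n(I_k-I_{k-1})$, where $I_j$ denotes the ball integral above (and $I_0=1$), into the claimed shape by a single summation by parts. Reindexing the telescoping differences collects the coefficient of $\mathds{1}_{\Bm(m-j)}(\x)$ as $\big(\phi(j)^n-\phi(j+1)^n\big)p^{\,j}$ for $j\in\{0,\dots,m-1\}$, leaving a top term from $j=m$. I expect this boundary term to be the one point needing care: since $\Bmdual(m)=\Gmdual$ is the entire dual group, its integral carries the full weight $\Vol(\Bmdual(m))=p^m$ and produces a central contribution of the form $p^m\phi(m)^n\mathds{1}_{\Bm(0)}(\x)$, while the isolated atom at $\y=\Gdmap{0}$ must be tracked separately in the bookkeeping. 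Checking that the assembled density integrates to $1$ against $\mu_m$ — the interior terms telescope to $1-\phi(m)^n$ and the central term restores the remaining mass — then pins down the normalization of this last term.
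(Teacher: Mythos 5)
Your argument follows the paper's proof essentially step for step: Fourier inversion of $\phi_X^{\,n}$, decomposition of $\Gmdual$ into the circles $\Smdual(0),\dots,\Smdual(m)$ using the radiality of $\phi_X$, circles written as differences of concentric balls, a summation by parts, and Lemma~\ref{Construction:Eq:Dualindicatori} to evaluate the ball integrals (the sign in $\chi(-xy)$ is indeed immaterial, as you note). The one place you deviate is exactly the place you flagged, and there you are right while the printed statement is not: the central term must carry the volume factor $\Vol(\Bmdual(m))=p^m$, i.e.\ it is $p^m\phi(m)^n\mathds 1_{\Bm(0)}(\x)$, whereas \eqref{Construction:PropEq:PDFcalculation} as printed has $\phi(m)^n\mathds 1_{\Bm(0)}(\x)$. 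Your normalization check settles this — the interior terms contribute total mass $1-\phi(m)^n$, and only $p^m\phi(m)^n\Vol(\Bm(0))=\phi(m)^n$ restores the rest — and one can see it concretely at $m=n=1$, where $\rho(1,\cdot)$ must equal $\rho_X$, which vanishes at $\Gmap{0}$: the printed formula gives the value $1$ there, yours gives $0$. A related slip sits in the second clause of Lemma~\ref{Construction:Eq:Dualindicatori} itself: since $\tilde{\mu}_m$ is counting measure, $\int_{\Bmdual(0)}\chi(xy)\,\d\y = 1$, not $p^{-m}$; your bookkeeping implicitly uses the correct weight $1$ for the atom at $\Gdmap{0}$, which is what makes your $i=0$ coefficient come out as $\big(1-\phi(1)^n\big)p^0$, in agreement with the stated sum. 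The discrepancy you caught is harmless downstream in the paper: in the proof of Theorem~\ref{Moment:Thm:Main} the density is integrated against $|\x|^r$, which vanishes identically on $\Bm(0)$, so the central term never contributes to the moment estimates regardless of its normalization.
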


\begin{proof}
Take the inverse Fourier transform of the $n$-fold product of the characteristic function of $X$ to obtain the equalities 
\begin{align*}
\rho(n,\x)&= \big (\F^{-1}(\phi_X(\cdot)^n\big)(\x)\\
&= \int_{p^{-m}\Zp\slash\Zp} \chi(xy) \phi_X(\y)^n\,\d\y%\\
= \sum_{i=0}^m \phi(i)^n\int_{\Smdual(i)} \chi(xy)\,\d\y.
\end{align*}

Rearrange terms to obtain the equalities
\begin{align*}
\rho(n,\x) &= \int_{\Bmdual(0)}\chi(xy)\,\d\y + \sum_{i=1}^m\phi(i)^n\Bigg(\int_{\Bmdual(i)} \chi(xy)\,\d\y - \int_{\Bmdual(i-1)} \chi(xy)\,\d\y\Bigg)\\
& = (1 - \phi(1)^n)\int_{\Bmdual(0)} \chi(xy)\,\d\y \\&\qquad\qquad + \sum_{i=1}^{m-1}(\phi(i)^n - \phi(i+1)^n)\int_{\Bmdual(i)}\chi(xy)\,\d\y + \phi(m)^n\int_{\Bmdual(m)}\chi(xy)\,\d\y,
\end{align*}
which together with Lemma~\ref{Construction:Eq:Dualindicatori} implies \eqref{Construction:PropEq:PDFcalculation}.
\end{proof}

%%%%%%%%%%%%%%%%%%%%%%%%%%%%%%%%%%%%%%%%%%%%%%%%%%%%%%%%%%%%%%%%%%%%%%
%%%%%%%%%%%%%%%%%%%%%%%%%%%%%%%%%%%%%%%%%%%%%%%%%%%%%%%%%%%%%%%%%%%%%%
%%%%%%%%%%%%%%%%%%%%%%%%%%%%%%%%%%%%%%%%%%%%%%%%%%%%%%%%%%%%%%%%%%%%%%

\subsection{Isometric embeddings of the discrete state spaces}

Take $\Gamma_m$ to be the injection from $\Gm$ to $\Zp$ that is given for any $x$ in $\Zp$ by %
\begin{equation}\label{Gamma-m}
\Gamma_m(\x) = \sum_{i\in\{0, \dots, m\}} a_x(i)p^{i}.
\end{equation} %
For any $g$ in $\Gm$, there is an $x$ in $\Zp$ so that $g$ is equal to $\x$.  For any other such element $x^\prime$ of $\Zp$, $\Gamma_m(\x)$ is equal to $\Gamma_m(\Gmap{x^\prime})$, and so this injection is independent of choice of representative.  The function $\Gamma_m$ is an isometry because the metric on $\Zp$ is an ultrametric, however, $\Gamma_m$ is not a group homomorphism.  Because of this, it is helpful to first construct random walks in the $\Gm$, and then use $\Gamma_m$ together with a time scaling to transform the $\Gm$-valued random walks into random walks on a discrete subset of $\Zp$.

%%%%%%%%%%%%%%%%%%%%%%%%%%%%%%%%%%%%%%%%%%%%%%%%%%%%%%%%%%%%%%%%%%%%%%
%%%%%%%%%%%%%%%%%%%%%%%%%%%%%%%%%%%%%%%%%%%%%%%%%%%%%%%%%%%%%%%%%%%%%%
%%%%%%%%%%%%%%%%%%%%%%%%%%%%%%%%%%%%%%%%%%%%%%%%%%%%%%%%%%%%%%%%%%%%%%
%%%%%%%%%%%%%%%%%%%%%%%%%%%%%%%%%%%%%%%%%%%%%%%%%%%%%%%%%%%%%%%%%%%%%%
%%%%%%%%%%%%%%%%%%%%%%%%%%%%%%%%%%%%%%%%%%%%%%%%%%%%%%%%%%%%%%%%%%%%%%
%%%%%%%%%%%%%%%%%%%%%%%%%%%%%%%%%%%%%%%%%%%%%%%%%%%%%%%%%%%%%%%%%%%%%%
%%%%%%%%%%%%%%%%%%%%%%%%%%%%%%%%%%%%%%%%%%%%%%%%%%%%%%%%%%%%%%%%%%%%%%
%%%%%%%%%%%%%%%%%%%%%%%%%%%%%%%%%%%%%%%%%%%%%%%%%%%%%%%%%%%%%%%%%%%%%%
%%%%%%%%%%%%%%%%%%%%%%%%%%%%%%%%%%%%%%%%%%%%%%%%%%%%%%%%%%%%%%%%%%%%%%
%%%%%%%%%%%%%%%%%%%%%%%%%%%%%%%%%%%%%%%%%%%%%%%%%%%%%%%%%%%%%%%%%%%%%%
%%%%%%%%%%%%%%%%%%%%%%%%%%%%%%%%%%%%%%%%%%%%%%%%%%%%%%%%%%%%%%%%%%%%%%
%%%%%%%%%%%%%%%%%%%%%%%%%%%%%%%%%%%%%%%%%%%%%%%%%%%%%%%%%%%%%%%%%%%%%%

\section{Moments of the processes on the $\Gm$}\label{Sec:Moments}

%%%%%%%%%%%%%%%%%%%%%%%%%%%%%%%%%%%%%%%%%%%%%%%%%%%%%%%%%%%%%%%%%%%%%%
%%%%%%%%%%%%%%%%%%%%%%%%%%%%%%%%%%%%%%%%%%%%%%%%%%%%%%%%%%%%%%%%%%%%%%
%%%%%%%%%%%%%%%%%%%%%%%%%%%%%%%%%%%%%%%%%%%%%%%%%%%%%%%%%%%%%%%%%%%%%%
%%%%%%%%%%%%%%%%%%%%%%%%%%%%%%%%%%%%%%%%%%%%%%%%%%%%%%%%%%%%%%%%%%%%%%
%%%%%%%%%%%%%%%%%%%%%%%%%%%%%%%%%%%%%%%%%%%%%%%%%%%%%%%%%%%%%%%%%%%%%%
%%%%%%%%%%%%%%%%%%%%%%%%%%%%%%%%%%%%%%%%%%%%%%%%%%%%%%%%%%%%%%%%%%%%%%
%%%%%%%%%%%%%%%%%%%%%%%%%%%%%%%%%%%%%%%%%%%%%%%%%%%%%%%%%%%%%%%%%%%%%%
%%%%%%%%%%%%%%%%%%%%%%%%%%%%%%%%%%%%%%%%%%%%%%%%%%%%%%%%%%%%%%%%%%%%%%
%%%%%%%%%%%%%%%%%%%%%%%%%%%%%%%%%%%%%%%%%%%%%%%%%%%%%%%%%%%%%%%%%%%%%%
%%%%%%%%%%%%%%%%%%%%%%%%%%%%%%%%%%%%%%%%%%%%%%%%%%%%%%%%%%%%%%%%%%%%%%
%%%%%%%%%%%%%%%%%%%%%%%%%%%%%%%%%%%%%%%%%%%%%%%%%%%%%%%%%%%%%%%%%%%%%%
%%%%%%%%%%%%%%%%%%%%%%%%%%%%%%%%%%%%%%%%%%%%%%%%%%%%%%%%%%%%%%%%%%%%%%

\subsection{Preliminary estimates}

To work with the formula for $\phi_X$ that Proposition~\ref{Construction:Prop:PhiFormula} provides requires some determination of the sign of various associated quantities.  The first required estimate, Proposition~\ref{Prim:Prop:Boundonalphaoverpb}, is already given in earlier works \cite{WJPA}, although more is now needed.

\begin{proposition}\label{Prim:Prop:Boundonalphaoverpb} 
For any positive real number $b$, both $\beta - 1$ and $\frac{\beta}{p^b}$ are in $(0,1)$.
\end{proposition}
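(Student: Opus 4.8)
The plan is to reduce both bounds to elementary inequalities in the single quantity $q = p^b$, which satisfies $q > 1$ because $p \geq 2$ and $b > 0$. First I would rewrite $\beta$ from \eqref{beta} in the form
\[
\beta = \frac{pq - 1}{q(p-1)},
\]
so that every subsequent manipulation takes place in terms of $q$ and $p$ alone.

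For the statement about $\beta - 1$, I would compute directly that
\[
\beta - 1 = \frac{q - 1}{q(p-1)}.
\]
Positivity is then immediate, since $q > 1$ and $p - 1 \geq 1$. For the upper bound $\beta - 1 < 1$ I would clear the positive denominator to obtain the equivalent inequality $q - 1 < q(p-1)$, that is, $q(2 - p) < 1$; because $p \geq 2$ the left-hand side is nonpositive, so this holds for every positive $b$.

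For the statement about $\beta/p^b = \beta/q$, I would compute
\[
\frac{\beta}{q} = \frac{pq - 1}{q^2(p-1)},
\]
which is visibly positive. To establish $\beta/q < 1$ I would show that the difference of denominator and numerator is positive, and the one step requiring a little care is the factorization
\[
q^2(p-1) - (pq - 1) = (q - 1)\bigl(q(p-1) - 1\bigr).
\]
Both factors are positive: $q - 1 > 0$, while $q(p-1) \geq q > 1$ forces $q(p-1) - 1 > 0$. This yields the bound. I expect the factorization to be the only nonroutine step; everything else is direct computation, so I anticipate no real obstacle.
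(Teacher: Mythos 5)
Your proof is correct in every step. With $q = p^b > 1$, the rewriting $\beta = \frac{pq-1}{q(p-1)}$, the identity $\beta - 1 = \frac{q-1}{q(p-1)}$, and the factorization $q^2(p-1) - (pq-1) = (q-1)\bigl(q(p-1)-1\bigr)$ all check out by direct expansion, and the sign arguments are sound since $q - 1 > 0$ and $p - 1 \geq 1$.

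One point of comparison worth knowing: the paper does not actually prove this proposition. It states it and defers to a citation of earlier work (the reference \cite{WJPA}), remarking only that the estimate "is already given in earlier works." So there is no in-paper argument to measure yours against; what your proposal supplies is a self-contained, purely algebraic verification of a fact the paper outsources. That is a reasonable trade: the paper keeps its preliminaries short by citing, while your two-line reduction to inequalities in the single variable $q$ makes the section logically independent of the external reference, at essentially no cost in length.
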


It is helpful to list some formulas that the proof of Theorem~\ref{Moment:Thm:Main} requires and analyze the positivity of the various terms.  For any $i$ in $\{1, \dots, m\}$,
\begin{align}\label{Momement:Eq:CharacterDiff}
    \phi(i) - \phi(i + 1) 
&= \beta(p^b - 1)\frac{p^{ib}}{p^{mb}} + \beta(p^b - 1)\frac{p^{ib}}{p^{mb}}\frac{1}{p^{mb}} + \beta(p^b - 1)\frac{p^{ib}}{p^{mb}}\frac{1}{p^{mb}(p^{mb}-1)},
\end{align}
which shows that $(\phi(i))$ is increasing in $i$.  The equality
\[\phi(m) = (1-\beta)\left(1 + \frac{1}{p^{mb}} + \frac{1}{p^{mb}(p^{mb}-1)}\right)\] implies that $\phi(m)$ is negative, however a similar calculation shows that $\phi(m-1)$ is positive.  Notice that the formula for $\phi(i)$ only agrees with $\phi(0)$ in limit, as the latter is equal to $1$.  Write $\psi(0)$ to extend the formula for $\phi(i)$, where $i$ is non-zero, to the case when $i$ is $0$ and obtain the equality 
\[\psi(0) = 1 - p^{-mb}\left(\beta- 1 + \beta p^{-mb}\right) + p^{-mb}(p^{mb}-1)^{-1}(1-\beta p^{-mb}).\] A simple calculation shows that the quantity $\psi(0)$ is bounded above by $1$ if and only if $\beta$ is bounded below by $1$, which it is.  Since $\phi(i)$ is decreasing in $i$, $\phi(i)$ is in $[0,1]$ for every $i$ in $\{1, \dots, m-1\}$.

For any $m$ and every $i$ in $\{1, \dots, m-1\}$, %
\begin{align}\label{Moments:Eq:CharFunEqpowN}
    \phi(i)^n&= \left(1 - \dfrac{\beta\Big(p^{ib}\big(1+ \frac{1}{p^{mb}}\big) - \beta^{-1}\Big)}{p^{mb}}+\frac{1}{p^{mb}(p^{mb}-1)}\left(1 - \dfrac{\beta p^{ib}}{p^{mb}}\right)\right)^n,
\end{align} %
and so
\begin{align}\label{Moment:Char:Bound}
    \phi(i)^n&\leq \left(1 - \dfrac{\beta\Big(p^{ib} - \beta^{-1}\Big)}{p^{mb}}+\frac{1}{p^{mb}(p^{mb}-1)}\right)^n= \left(1 - \dfrac{\beta p^{ib} - \frac{p^{mb}}{p^{mb}-1}}{p^{mb}}\right)^n.
\end{align}
Finally, since $\phi(0)$ is not given by the same formula as the other $\phi(i)$, it will be important to separately determine the quantity $\phi(0)^n-\phi(1)^n$, which is given by
\begin{align}\label{Moments:Eq:Exceptional}
\phi(0) - \phi(1)^n & = 1 - \phi(1)^n\\
& \leq 1 - \left(1 - \dfrac{\beta p^{b}\big(1+ \frac{1}{p^{mb}}\big)}{p^{mb}}\right)^n.
\end{align}

%%%%%%%%%%%%%%%%%%%%%%%%%%%%%%%%%%%%%%%%%%%%%%%%%%%%%%%%%%%%%%%%%%%%%%
%%%%%%%%%%%%%%%%%%%%%%%%%%%%%%%%%%%%%%%%%%%%%%%%%%%%%%%%%%%%%%%%%%%%%%
%%%%%%%%%%%%%%%%%%%%%%%%%%%%%%%%%%%%%%%%%%%%%%%%%%%%%%%%%%%%%%%%%%%%%%

\subsection{Moment estimates}

Denote by ${\mathds E}^\ast_m$ the expected value with respect to the measure ${\rm P}^\ast_m$.  Henceforth, take $M(p,b)$ to be a natural number with the property that if $m$ is greater than $M(p,b)$, then \[\beta \ge \frac{p^{mb}}{p^{mb}-1}.\]  Some basic manipulations reveal that the given inequality is valid as long as \[m \ge \frac{1}{b} \log_p\left(\frac{p^{b+1}-1}{p^b-1}\right),\] so if $b$ is equal to $1$, and $p$ is greater than $2$, then the condition is non-restrictive.  If $p$ is 2, then $m$ must be at least 2.  Since the arguments below are simplified if $m$ is at least 2, for sake of convenience take $M(p,b)$ to be given, henceforth, by \[M(p,b) = 1+ \left\lceil\frac{1}{b} \log_p\left(\frac{p^{b+1}-1}{p^b-1}\right)\right\rceil,\] where $\ceil{\cdot}$ represents the \emph{ceiling function}.  Note that small values of $b$ require $m$ to be larger.  The intuition should be that paths more rapidly exit balls of a fixed radius for smaller values of $b$, as earlier work on first exit times demonstrates both in the single and higher dimensional settings \cite{Weisbart:2021, RW:JFAA:2023}.

\begin{theorem}\label{Moment:Thm:Main}
For any natural number $n$ and any positive real number $r$ in $(0,b)$, there are constants $K$ and $C$ that are independent of $m$ so that for any $m$ that is greater than $M(p,b)$, %
\[
{\mathds E}_m^\ast[|S_n|^r] \leq Kn^{\frac{r}{b}}p^{-mr} +  \left(1 - \left(1 - \dfrac{\beta p^{b}\big(1+ \frac{1}{p^{mb}}\big)}{p^{mb}}\right)^n\right)\frac{p^r(p-1)}{p^{r+1}-1}.
\]
\end{theorem}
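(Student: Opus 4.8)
The plan is to evaluate the moment by integrating $|\x|^r$ against the density $\rho(n,\cdot)$ of Proposition~\ref{Construction:Prop:PDFcalculation} over $\Gm$, to peel off the exceptional $i=0$ contribution that reproduces the second term of the claimed bound, and then to control the remaining ``bulk'' sum by splitting it at the diffusive scale. First I would write ${\mathds E}_m^\ast[|S_n|^r]=\int_{\Gm}|\x|^r\rho(n,\x)\,\d\x$ and substitute \eqref{Construction:PropEq:PDFcalculation}. Since $|\x|^r$ vanishes on $\Bm(0)=\{\Gmap{0}\}$, the $\phi(m)^n\mathds{1}_{\Bm(0)}$ summand drops out, leaving
\[
{\mathds E}_m^\ast[|S_n|^r]=\sum_{i=0}^{m-1}\big(\phi(i)^n-\phi(i+1)^n\big)p^i\,I(m-i),\qquad I(\ell):=\int_{\Bm(\ell)}|\x|^r\,\d\x .
\]
Using the volume formulas \eqref{DiscreteSettings:Eq:BallA} one computes $I(\ell)=(1-p^{-1})p^{-m(r+1)}\sum_{k=1}^{\ell}p^{k(r+1)}$, whence a geometric-sum estimate yields $I(m)<C_0$ and $p^iI(m-i)\le C_0\,p^{-ir}$, where $C_0=\frac{p^r(p-1)}{p^{r+1}-1}$ is precisely the constant in the second term of the theorem.

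Next I would isolate the $i=0$ summand, which equals $(1-\phi(1)^n)\,I(m)\le(1-\phi(1)^n)\,C_0$; bounding $1-\phi(1)^n$ by \eqref{Moments:Eq:Exceptional} reproduces the second term of the claimed inequality verbatim. It then remains to bound the bulk $\sum_{i=1}^{m-1}(\phi(i)^n-\phi(i+1)^n)p^iI(m-i)$ by $Kn^{r/b}p^{-mr}$. Because $\phi$ is decreasing (all terms of \eqref{Momement:Eq:CharacterDiff} are positive) with $\phi(i)\in[0,1]$ for $1\le i\le m-1$, the differences are nonnegative for $1\le i\le m-2$, so these terms are bounded by $C_0\sum_{i=1}^{m-2}(\phi(i)^n-\phi(i+1)^n)p^{-ir}$; the single term $i=m-1$, where $\phi(m)<0$, I would bound in absolute value by $2\,p^{m-1}I(1)=O(p^{-mr})$ and absorb into $K$.

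The heart of the argument, and the main obstacle, is extracting the exponent $r/b$ rather than the naive $n$ from $\sum_{i=1}^{m-2}(\phi(i)^n-\phi(i+1)^n)p^{-ir}$. I would split this sum at the transition index $i^\ast=m-\lfloor b^{-1}\log_p n\rfloor$, where $\phi(i)^n$ passes from near $1$ to near $0$. For $i\le i^\ast$ I use the mean-value bound $\phi(i)^n-\phi(i+1)^n\le n\big(\phi(i)-\phi(i+1)\big)$ together with the exact identity \eqref{Momement:Eq:CharacterDiff}, which gives $\phi(i)-\phi(i+1)=\beta(p^b-1)\frac{p^{mb}}{p^{mb}-1}p^{(i-m)b}$, a fixed multiple of $p^{(i-m)b}$; summing the resulting geometric series $\sum_i p^{i(b-r)}$, convergent upward precisely because $r<b$, and substituting $p^{i^\ast}\le p\,p^m n^{-1/b}$ produces a bound of the form $K_1\,n^{r/b}p^{-mr}$. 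For $i>i^\ast$ I instead use $\phi(i)^n-\phi(i+1)^n\le\phi(i)^n\le e^{-n\delta_i}$ with $\delta_i=\big(\beta p^{ib}-\frac{p^{mb}}{p^{mb}-1}\big)p^{-mb}$ coming from \eqref{Moment:Char:Bound}, bound $p^{-ir}\le p^{-i^\ast r}\le n^{r/b}p^{-mr}$, and show $\sum_{i>i^\ast}e^{-n\delta_i}=O(1)$ uniformly in $m$.

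The uniformity in the last step is where the hypothesis $m>M(p,b)$ is essential: it guarantees $\beta\ge\frac{p^{mb}}{p^{mb}-1}$, hence $\delta_i\ge0$ for all $i\ge1$, so that \eqref{Moment:Char:Bound} is a genuine exponential bound, and it forces $p^{i^\ast b}\ge p^{mb}/n$, reducing the tail $\sum_{i>i^\ast}e^{-n\delta_i}$ to the convergence of $\sum_{l\ge1}e^{-\beta p^{lb}}$, a constant $K_2$ independent of $m$ and $n$. Combining the two regimes gives the bulk estimate, and collecting constants yields the theorem with $K$ built from $C_0(K_1+K_2)$ and the absorbed boundary constant, the coefficient $\frac{p^r(p-1)}{p^{r+1}-1}$ of the second term being exactly the limiting value $\lim_{m}I(m)$. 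The delicate points will be verifying that the $r<b$ geometric sum is dominated by its top term at $i^\ast$ and that $K_1,K_2$ (and the boundary constant, which uses $|\phi(m)|\le1$ for $m>M(p,b)$, requiring $\beta\le 2-p^{-mb}$) are all genuinely independent of $m$.
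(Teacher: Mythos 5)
Your proposal is correct in substance and reaches the stated bound, but its core is genuinely different from the paper's argument. The setup coincides: like the paper, you integrate $|\x|^r$ against $\rho(n,\cdot)$ from Proposition~\ref{Construction:Prop:PDFcalculation}, compute the ball integrals from \eqref{DiscreteSettings:Eq:BallA} to produce the factor $p^{-ir}$ and the constant $C_0=\frac{p^r(p-1)}{p^{r+1}-1}$, and peel off the $i=0$ term, bounding it via \eqref{Moments:Eq:Exceptional} to get the second term of the theorem verbatim. The divergence is in extracting $n^{r/b}p^{-mr}$ from the bulk sum $\sum_{i\ge 1}(\phi(i)^n-\phi(i+1)^n)p^{-ir}$. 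The paper applies the mean-value bound $\phi(i)^n-\phi(i+1)^n<n\phi(i)^{n-1}(\phi(i)-\phi(i+1))$ for \emph{every} $i$, sets $x_i=\big(\beta p^{ib}-\tfrac{p^{mb}}{p^{mb}-1}\big)p^{-mb}$, and recognizes the whole sum as a lower Riemann sum, with mesh $\Delta(i)$, of the Beta-type integral $\int_0^1 nx^{-r/b}(1-x)^{n-1}\,{\rm d}x$, citing \cite{WJPA, W:Expo:24} for the bound of order $n^{r/b}$; this is compact but leans on external results and on the spacing structure of the $x_i$. You instead split at the transition index $i^\ast=m-\lfloor b^{-1}\log_p n\rfloor$: below $i^\ast$, the mean-value bound together with the exact difference formula \eqref{Momement:Eq:CharacterDiff} gives a geometric series in $p^{i(b-r)}$ dominated by its top term (this is exactly where $r<b$ enters), yielding $K_1 n^{r/b}p^{-mr}$; above $i^\ast$, you use \eqref{Moment:Char:Bound} as an exponential bound with an $O(1)$ tail. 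Your route is self-contained and elementary, and it has one additional merit: you treat the $i=m-1$ term separately. Since $\phi(m)<0$, the paper's assertion that $(\phi(i)^n)$ is a decreasing sequence in $[0,1]$ is not literally valid at $i=m$ (for $p=2$ one can even have $|\phi(m)|>\phi(m-1)$, so the integrand bound used there fails pointwise when $n-1$ is even), and your crude $O(p^{-mr})$ disposal of that single term, using $|\phi(m)|\le 1$, is the cleaner fix. One detail in your tail estimate needs tightening: $n\delta_{i^\ast+l}\ge\beta p^{lb}$ is not exactly right, because $\delta_i$ carries the subtracted term $\tfrac{1}{p^{mb}-1}$. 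But under $m>M(p,b)$ you have $\tfrac{1}{p^{mb}-1}\le\beta p^{-mb}$, hence $\delta_i\ge\beta(p^{ib}-1)p^{-mb}\ge\beta(1-p^{-b})p^{ib}p^{-mb}$ for $i\ge 1$, and the tail reduces to $\sum_{l\ge1}e^{-\beta(1-p^{-b})p^{lb}}$---the same conclusion with a harmless loss in the constant. With that repair, your constants are, as required, independent of $m$ (and of $n$, which the later application to $n=\lfloor t\lambda_m\rfloor$ in Section~\ref{Sec:Convergence} actually needs).
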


\begin{proof}
Use the expression for $\rho(n,[x])$ given by Proposition~\ref{Construction:Prop:PDFcalculation} to obtain the equalities %
\begin{align}\label{Moment:FullFirstEquality}
{\mathds E}^\ast_m[|S_n|^r] &=\int_{\Gm} |[x]|^r \rho(n,[x])\,\d\x\notag\\
&= \sum_{i=0}^{m-1}\Big(\phi(i)^n - \phi(i+1)^n\Big)p^i\int_{\Gm}|[x]|^r\mathds{1}_{\Bm(m-i)}([x])\,\d\x. 
\end{align}
For each $i$ in $\{0,1, \dots, m\}$, use the decomposition of an integral over $\Gm$ into an integral over circles to obtain the equalities
\begin{align}\label{Moment:Eq:First}
\int_{\Gm}|[x]|^r\mathds{1}_{\Bm(m-i)}([x])\,\d\x & = \sum_{\ell = i}^{m}\int_{\Gm}|[x]|^r\mathds{1}_{\Sm(m-\ell)}([x])\,\d\x\notag\\
&= \left(1-\frac{1}{p}\right)p^{-r(m-1)}p^{-(m-1)}\left\{\frac{p^{r(m-i)}p^{(m-i)}-1}{p^{r}p^{1}-1}\right\}.
\end{align}
Multiply by $p^i$ to see that 
\begin{align}\label{Moment:Eq:Second}
p^i\int_{\Gm}|[x]|^r\mathds{1}_{\Bm(-i)}([x])\,\d\x = \left(p-1\right)p^rp^i\left\{\frac{p^{-ir}p^{-i}-p^{-rm}p^{-m}}{p^{r+1}-1}\right\}.
\end{align}

Equations~\eqref{Moment:Eq:First} and \eqref{Moment:Eq:Second} together imply that
\begin{align}\label{Moment:FullFirstEquality}
{\mathds E}[|S_n|^r] %
 & = \sum_{i=1}^{m-1}\Big(\phi(i)^n - \phi(i+1)^n\Big)\left(p-1\right)p^rp^i\left\{\frac{p^{-i(r+1)}-p^{-m(r+1)}}{p^{r+1}-1}\right\}\notag\\%
 &\qquad\qquad + \Big(1 - \phi(1)^n\Big)\left(p-1\right)p^r\left\{\frac{1-p^{-m(r+1)}}{p^{r+1}-1}\right\}.
\end{align}
Denote by $I(m,n)$ the quantity
\begin{align*}
I(m,n) &= \sum_{i=1}^{m-1}\Big(\big(\phi(i)\big)^n - \big(\phi(i+1)\big)^n\Big)\left(p-1\right)p^rp^i\left\{\frac{p^{-i(r+1)}-p^{-r(m+1)}}{p^{r+1}-1}\right\}\\
&< \sum_{i=1}^{m-1}\Big(\big(\phi(i)\big)^n - \big(\phi(i+1)\big)^n\Big)\left(p-1\right)p^rp^i\left\{\frac{p^{-i(r+1)}}{p^{r+1}-1}\right\}\\
&= \frac{p^r(p-1)}{p^{r+1}-1}\sum_{i=1}^{m-1}\Big(\phi(i)^n - \phi(i+1)^n\Big)p^{-ir}.
\end{align*}
Use the fact that $(\phi(i)^n)$ is a decreasing finite sequence in $[0,1]$ in the index $i$ to write the difference of successive terms as%
\begin{align*}
\phi(i)^n - \phi(i+1)^n & = n\int_{\phi(i+1)}^{\phi(i)} s^{n-1}\,{\rm d}s.
\end{align*}
Since the integrand is an increasing function on $[0,1]$, 
\begin{align}\label{Moment:Eq:DifferenceEstim}
\phi(i)^n - \phi(i+1)^n & < n\phi(i)^{n-1}\big(\phi(i) - \phi(i+1)\big)\notag\\%
& = c(m)n\phi(i)^{n-1}\frac{p^{ib}}{p^{mb}},
\end{align}
where \[c(m) = \beta(p^b-1)\left(1 + \frac{1}{p^{mb}} + \frac{1}{p^{mb}(p^{mb}-1)}\right).\] 
Equations \eqref{Moment:Char:Bound} and \eqref{Moment:Eq:DifferenceEstim} together imply that 
\begin{align}\label{Moment:Imn:ExpressAsxi}
I(m,n) &\leq \frac{c(m)p^r(p-1)}{p^{r+1}-1}\sum_{i=1}^{m-1}np^{-ir}\phi(i)^{n-1}\frac{p^{ib}}{p^{mb}}\notag\\
& = \frac{c(m)p^r(p-1)}{p^{r+1}-1}\sum_{i=1}^{m-1}np^{-ir}\left(1 - \dfrac{\beta p^{ib} - \frac{p^{mb}}{p^{mb}-1}}{p^{mb}}\right)^{n-1}\frac{p^{ib}}{p^{mb}}
\end{align}

For each $i$ in $\{0, 1, \dots, m-1\}$ write \[x_i = \dfrac{\beta p^{ib} - \frac{p^{mb}}{p^{mb}-1}}{p^{mb}}.\]  As long as $m$ is taken to be large enough so that \[\beta \ge \frac{p^{mb}}{p^{mb}-1},\] $x_0$ is non-negative, and so this is valid under the hypotheses of the theorem.  For each interval $I(i)$ with endpoints $x_{i-1}$ and $x_{i}$, the length of $I(i)$, denoted $\Delta(i)$, is given by \[\Delta(i) = x_{i} - x_{i-1} = \frac{1}{p^b}\beta(p^b-1)\frac{p^{ib}}{p^{mb}}.\]  Use the expressions for $x_i$ and $\Delta(i)$ to rewrite \eqref{Moment:Imn:ExpressAsxi} and obtain the inequality%
\begin{align}\label{Moment:Imn:withr}
I(m,n) &\leq \frac{c(m)p^r(p-1)}{\beta p^b(p^b-1)(p^{r+1}-1)}\sum_{i=1}^{m-1}np^{-ir}\left(1 - x_i\right)^{n-1}\Delta(i).
\end{align}
Rewrite $p^{-ir}$ in terms of $x_i$ to obtain the equality \[p^{-ir} = \left(x_i + \frac{1}{p^{mb}-1}\right)^{-\frac{r}{b}}p^{-rm}\beta^{\frac{r}{b}} < x_i^{-\frac{r}{b}}p^{-rm}\beta^{\frac{r}{b}},\] which implies together with \eqref{Moment:Imn:withr} that
\begin{align}\label{Moment:Imn:finalImn}
I(m,n) &\leq Kp^{-rm}\sum_{i=1}^{m-1}nx_i^{-\frac{r}{b}}\left(1 - x_i\right)^{n-1}\Delta(i),
\end{align}
where \[K = \frac{c(m)p^r(p-1)}{p^b(p^b-1)(p^{r+1}-1)}\beta^{\frac{r-b}{b}}.\]

The sum in \eqref{Moment:Imn:finalImn} is a lower Riemann sum approximation of the integral 
\begin{align*}
\int_{x_1}^{x_{m-1}} nx^{-\frac{r}{b}}(1-x)^{n-1}\,{\rm d}x &<  \int_0^{1} nx^{-\frac{r}{b}}(1-x)^{n-1}\,{\rm d}x.
\end{align*}
Follow the earlier arguments in the $\mathds Q_p$ and local field settings \cite{WJPA, W:Expo:24} to obtain the inequality
\begin{equation}
\sum_{i=1}^{m-1}nx_i^{-\frac{r}{b}}\left(1 - x_i\right)^{n-1}\Delta(i-1) \leq n^{-\frac{b-r}{b}}\Big(\frac{n+\frac{b-r}{b}}{n}\Big)^{\frac{r}{b}},
\end{equation}
which together with \eqref{Moment:Imn:finalImn} implies that 
\begin{equation}
I(m,n) \leq Kp^{-rm}n^{\frac{r}{b}}\Big(\frac{n+\frac{b-r}{b}}{n}\Big)^{\frac{r}{b}}.
\end{equation}
Take $C$ to be equal to $K\Big(\frac{n+\frac{b-r}{b}}{n}\Big)^{\frac{r}{b}}$ to obtain the inequality 
\begin{equation}
I(m,n) \leq Cp^{-rm}n^{\frac{r}{b}}.
\end{equation}

The inequality \eqref{Moments:Eq:Exceptional} gives an upper bound for the right hand term in \eqref{Moment:FullFirstEquality}, namely%
\begin{align*}
\Big(1 - \phi(1)^n\Big)\left(p-1\right)p^r\left\{\frac{1-p^{-m(r+1)}}{p^{r+1}-1}\right\}\leq \left(1 - \left(1 - \dfrac{\beta p^{b}\big(1+ \frac{1}{p^{mb}}\big)}{p^{mb}}\right)^n\right)\frac{p^r(p-1)}{p^{r+1}-1}.
\end{align*}

\end{proof}

%%%%%%%%%%%%%%%%%%%%%%%%%%%%%%%%%%%%%%%%%%%%%%%%%%%%%%%%%%%%%%%%%%%%%%
%%%%%%%%%%%%%%%%%%%%%%%%%%%%%%%%%%%%%%%%%%%%%%%%%%%%%%%%%%%%%%%%%%%%%%
%%%%%%%%%%%%%%%%%%%%%%%%%%%%%%%%%%%%%%%%%%%%%%%%%%%%%%%%%%%%%%%%%%%%%%
%%%%%%%%%%%%%%%%%%%%%%%%%%%%%%%%%%%%%%%%%%%%%%%%%%%%%%%%%%%%%%%%%%%%%%
%%%%%%%%%%%%%%%%%%%%%%%%%%%%%%%%%%%%%%%%%%%%%%%%%%%%%%%%%%%%%%%%%%%%%%
%%%%%%%%%%%%%%%%%%%%%%%%%%%%%%%%%%%%%%%%%%%%%%%%%%%%%%%%%%%%%%%%%%%%%%
%%%%%%%%%%%%%%%%%%%%%%%%%%%%%%%%%%%%%%%%%%%%%%%%%%%%%%%%%%%%%%%%%%%%%%
%%%%%%%%%%%%%%%%%%%%%%%%%%%%%%%%%%%%%%%%%%%%%%%%%%%%%%%%%%%%%%%%%%%%%%
%%%%%%%%%%%%%%%%%%%%%%%%%%%%%%%%%%%%%%%%%%%%%%%%%%%%%%%%%%%%%%%%%%%%%%
%%%%%%%%%%%%%%%%%%%%%%%%%%%%%%%%%%%%%%%%%%%%%%%%%%%%%%%%%%%%%%%%%%%%%%
%%%%%%%%%%%%%%%%%%%%%%%%%%%%%%%%%%%%%%%%%%%%%%%%%%%%%%%%%%%%%%%%%%%%%%
%%%%%%%%%%%%%%%%%%%%%%%%%%%%%%%%%%%%%%%%%%%%%%%%%%%%%%%%%%%%%%%%%%%%%%

\section{Convergence of the embedded processes}\label{Sec:Convergence}

%%%%%%%%%%%%%%%%%%%%%%%%%%%%%%%%%%%%%%%%%%%%%%%%%%%%%%%%%%%%%%%%%%%%%%
%%%%%%%%%%%%%%%%%%%%%%%%%%%%%%%%%%%%%%%%%%%%%%%%%%%%%%%%%%%%%%%%%%%%%%
%%%%%%%%%%%%%%%%%%%%%%%%%%%%%%%%%%%%%%%%%%%%%%%%%%%%%%%%%%%%%%%%%%%%%%
%%%%%%%%%%%%%%%%%%%%%%%%%%%%%%%%%%%%%%%%%%%%%%%%%%%%%%%%%%%%%%%%%%%%%%
%%%%%%%%%%%%%%%%%%%%%%%%%%%%%%%%%%%%%%%%%%%%%%%%%%%%%%%%%%%%%%%%%%%%%%
%%%%%%%%%%%%%%%%%%%%%%%%%%%%%%%%%%%%%%%%%%%%%%%%%%%%%%%%%%%%%%%%%%%%%%
%%%%%%%%%%%%%%%%%%%%%%%%%%%%%%%%%%%%%%%%%%%%%%%%%%%%%%%%%%%%%%%%%%%%%%
%%%%%%%%%%%%%%%%%%%%%%%%%%%%%%%%%%%%%%%%%%%%%%%%%%%%%%%%%%%%%%%%%%%%%%
%%%%%%%%%%%%%%%%%%%%%%%%%%%%%%%%%%%%%%%%%%%%%%%%%%%%%%%%%%%%%%%%%%%%%%
%%%%%%%%%%%%%%%%%%%%%%%%%%%%%%%%%%%%%%%%%%%%%%%%%%%%%%%%%%%%%%%%%%%%%%
%%%%%%%%%%%%%%%%%%%%%%%%%%%%%%%%%%%%%%%%%%%%%%%%%%%%%%%%%%%%%%%%%%%%%%
%%%%%%%%%%%%%%%%%%%%%%%%%%%%%%%%%%%%%%%%%%%%%%%%%%%%%%%%%%%%%%%%%%%%%%

%%%%%%%%%%%%%%%%%%%%%%%%%%%%%%%%%%%%%%%%%%%%%%%%%%%%%%%%%%%%%%%%%%%%%%
%%%%%%%%%%%%%%%%%%%%%%%%%%%%%%%%%%%%%%%%%%%%%%%%%%%%%%%%%%%%%%%%%%%%%%
%%%%%%%%%%%%%%%%%%%%%%%%%%%%%%%%%%%%%%%%%%%%%%%%%%%%%%%%%%%%%%%%%%%%%%

\subsection{The embedded processes}

Given any null sequence $(\tau_m)$, a sequence of \emph{time scales}, take $(\iota_m)$ to be a sequence of functions so that for each $m$, \[\iota_m \colon \mathds N_0\times \Gm \to [0,\infty)\times \Zp\quad \text{by} \quad \iota(n,\x) = (\tau_mn, \Gamma_m(\x)).\]  For typographical efficiency, take $\lambda_m$ to be the reciprocal of $\tau_m$.  The function $\iota_m$ acts on the stochastic process $\big(F(\mathds N_0\colon \Gm), {\rm P}^\ast_m, S\big)$ to produce a continuous time process in the following way.  For any real number $t$, define the abstract random variable $\Y_t$ by \[\Y_t = \Gamma_m\Big(S_0 + X_1 + \dots + X_{\floor{t\lambda_m}}\Big).\]  

The Kolmogorov Extension theorem guarantees the existence of a stochastic process $(F([0,\infty)\colon \Zp), {\rm P}_m, Y)$ that has the same finite dimensional distributions as $\Y$.  Equivalently, directly define ${\rm P}_m$ on any simple cylinder set $C(h)$ by \begin{equation}\label{Convergence:Eq:neMeas}{\rm P}_m(C(h)) = {\rm P}^\ast_m\left(\bigcap_{i\in\{0, 1, \dots, \ell(h)\}}\left(S_{\floor{e_h(i)\lambda}}\right)^{-1}(\Gamma^{-1}_m(U(i))\right).\end{equation} 

Take $\sigma$ to be any positive real number, and specialize $(\tau_m)$ so that \[\lambda_m = \sigma p^{mb}.\] Take ${\mathds E}_m$ to be the expected value with respect to the measure $\Pb_m$.  Denote by $M^\prime(p,b)$ the quantity \[M^\prime(p,b) = \frac{1}{b}\log_p(2\sqrt{2}p^b).\]  A straightforward calculation shows that if $m$ is at least $M^\prime(p,b)$, then \[\beta p^b\Big(1+\frac{1}{p^{mb}}\Big) < p^{mb}.\]

\begin{lemma}\label{Convergence:lemma:ExtraTerm}
For any natural number $m$ that is greater than $M^\prime(p,b)$ and any real number $s$ in $(0,1)$, there is a constant $K$ so that for any $t$ in $[0, \infty)$,
\begin{align*}
\left(1 - \left(1 - \dfrac{\beta p^{b}\big(1+ \frac{1}{p^{mb}}\big)}{p^{mb}}\right)^{t\sigma p^{mb}}\right) \leq Kt^s.
\end{align*}
\end{lemma}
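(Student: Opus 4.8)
The plan is to rewrite the left-hand side as a standard exponential-decay expression and then dominate it by $t^s$ separately on $[0,1]$ and on $[1,\infty)$. First I would set
$a = \beta p^b\big(1 + p^{-mb}\big)p^{-mb}$
and observe that, as noted immediately before the lemma, the hypothesis $m > M^\prime(p,b)$ is exactly what guarantees $\beta p^b(1 + p^{-mb}) < p^{mb}$, so that $a$ lies strictly in $(0,1)$ (positivity of $\beta$ and of the powers of $p$ gives $a > 0$). Consequently the base $1 - a$ lies in $(0,1)$, and setting $\gamma = -\sigma p^{mb}\ln(1-a) > 0$ the quantity to be estimated is precisely $1 - \e^{-\gamma t}$. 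This reformulation is the only place the hypothesis on $m$ enters: it ensures that $\gamma$ is a well-defined positive constant depending on $m$, $p$, $b$, and $\sigma$, which is permissible since $K$ is allowed to depend on all of these (the lemma fixes $m$).

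Next I would record two elementary bounds. The convexity inequality $\e^{-x} \ge 1 - x$ yields $1 - \e^{-\gamma t} \le \gamma t$ for every $t \ge 0$, while positivity of the exponential gives the trivial bound $1 - \e^{-\gamma t} \le 1$. The argument then splits on the size of $t$. On $[0,1]$, because $s \in (0,1)$ one has $t \le t^s$, so $1 - \e^{-\gamma t} \le \gamma t \le \gamma t^s$. On $[1,\infty)$, because $s > 0$ one has $1 \le t^s$, so $1 - \e^{-\gamma t} \le 1 \le t^s$. Taking $K = \max\{\gamma, 1\}$ then gives $1 - \e^{-\gamma t} \le K t^s$ for all $t$ in $[0,\infty)$, which is the assertion.

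There is no substantial obstacle here; the entire content is the identification of $a \in (0,1)$ through the threshold $M^\prime(p,b)$ together with the case split that treats the small-$t$ and large-$t$ regimes at once. The only point needing any care is that neither bound works alone: the linear estimate $\gamma t$ is useless for large $t$, where it grows without bound while the left-hand side never exceeds $1$, and the constant estimate $1$ is useless for small $t$, where it fails to vanish as $t \to 0$. Interpolating the two through the factor $t^s$, with $s \in (0,1)$, is exactly what produces a single clean inequality valid uniformly in $t$.
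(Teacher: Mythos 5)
Your proof is correct, and its skeleton matches the paper's: a bound linear in $t$, the trivial bound by $1$, and the split at $t=1$ using $t\leq t^s$ on $[0,1]$ and $1\leq t^s$ on $[1,\infty)$, with $K$ the maximum of the two constants. The difference lies in how the linear bound is produced. The paper invokes the Bernoulli-type inequality $1-x\leq\bigl(1-\frac{x}{n}\bigr)^n$ for natural $n$ and then ``changes variables'' to the real exponent $t\sigma p^{mb}$, obtaining the bound $2p^b\beta\sigma t$; you instead rewrite the power exactly as $\e^{-\gamma t}$ with $\gamma=-\sigma p^{mb}\ln(1-a)$, $a=\beta p^b\bigl(1+p^{-mb}\bigr)p^{-mb}$, and use $1-\e^{-u}\leq u$. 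Each route buys something. Yours is more robust in one respect: the generalized Bernoulli inequality reverses direction for real exponents in $(0,1)$, so the paper's intermediate inequality is delicate precisely when $t\sigma p^{mb}<1$, whereas your bound $1-\e^{-\gamma t}\leq\gamma t$ holds for every $t\geq 0$ with no restriction. Cutting the other way, the paper's constant $\max(2p^b\beta\sigma,1)$ is manifestly independent of $m$, while your $\gamma$ depends on $m$ through $-p^{mb}\ln(1-a)$. The lemma as stated tolerates this, since $K$ is quantified after $m$, so your proof is complete as written; but the downstream tightness argument (Proposition~\ref{tight}) implicitly needs moment constants that are uniform in $m$. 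If you want your version to serve that purpose, add the one-line observation that $-\ln(1-a)\leq a/(1-a)$ and $a\to 0$ as $m\to\infty$, so $\gamma$ is bounded uniformly over all $m$ greater than $M^\prime(p,b)$, which recovers an $m$-independent constant.
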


\begin{proof}
For any natural number $n$ and any $x$ in $[0,n]$, %
\begin{equation}\label{Convergence:Eq:BinIneqA}
1-x \leq \Big(1-\frac{x}{n}\Big)^n\quad\text{and so}\quad x \geq 1-\Big(1-\frac{x}{n}\Big)^n.
\end{equation}  %
Change variables and take $m$ to be large enough so that \[\beta p^{b}\Big(1+ \frac{1}{p^{mb}}\Big) \leq p^{mb}\] to see that \eqref{Convergence:Eq:BinIneqA} implies that 
\begin{equation}\label{Convergence:Eq:BinIneqB}
2p^b\beta\sigma t \geq \left(1 - \left(1 - \dfrac{\beta p^{b}\big(1+ \frac{1}{p^{mb}}\big)}{p^{mb}}\right)^{t\sigma p^{mb}}\right).
\end{equation}  %
The quantity on the right hand side of \eqref{Convergence:Eq:BinIneqB} is bounded above by $1$, and for any $t$ in $[0,1]$, $t^s$ is greater than or equal to $t$, so
\begin{equation}\label{Convergence:Eq:BinIneqC}
\max(2p^b\beta\sigma, 1)t^s \geq \left(1 - \left(1 - \dfrac{\beta p^{b}\big(1+ \frac{1}{p^{mb}}\big)}{p^{mb}}\right)^{t\sigma p^{mb}}\right).
\end{equation}  %

\end{proof}

Denote by $N(p,b)$ the maximum of $M(p,b)$ and $M^\prime(p,b)$.

\begin{proposition}
For any natural number $m$ that is greater than $N(p,b)$ and any positive real number $t$, there is a constant $C$ that is independent of $t$ so that for any $r$ in $(0,b)$, \[{\mathds E}_m[|Y_t|^r] \leq Ct^{\frac{r}{b}}.\]
\end{proposition}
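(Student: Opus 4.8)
The plan is to transport the discrete-time bound of Theorem~\ref{Moment:Thm:Main} through the embedding $\Gamma_m$ and then to use the time scaling $\lambda_m=\sigma p^{mb}$ to convert factors of $p^{m}$ into factors of $t$. First I would recall that the abstract random variable $\Y_t$ is defined as $\Gamma_m\big(S_{\lfloor t\lambda_m\rfloor}\big)$ and that $\Gamma_m$ preserves the $p$-adic absolute value, so that $|\Y_t|=|S_{\lfloor t\lambda_m\rfloor}|$. Since $Y$ is a model for $\Y$, setting $n=\lfloor t\sigma p^{mb}\rfloor$ gives the identity ${\mathds E}_m[|Y_t|^r]={\mathds E}_m^\ast[|S_n|^r]$, and because $m>N(p,b)\ge M(p,b)$, Theorem~\ref{Moment:Thm:Main} bounds the right-hand side by its two characteristic terms.

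For the first term I would use $n\le t\sigma p^{mb}$, so that $n^{r/b}\le \sigma^{r/b}t^{r/b}p^{mr}$ and the factor $p^{mr}$ exactly cancels the $p^{-mr}$ in the estimate, leaving $Kn^{r/b}p^{-mr}\le K\sigma^{r/b}t^{r/b}$. This cancellation is the whole reason the scaling $\lambda_m=\sigma p^{mb}$ was chosen, and it is what renders the resulting constant independent of both $m$ and $t$.

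For the second term I would first pass from the integer exponent $n$ to the real exponent $t\sigma p^{mb}$ appearing in Lemma~\ref{Convergence:lemma:ExtraTerm}. Because $m>M^\prime(p,b)$, the base $1-\beta p^b(1+p^{-mb})/p^{mb}$ lies in $[0,1]$, so raising it to the smaller exponent $n\le t\sigma p^{mb}$ can only increase it; consequently the factor $1-(\,\cdot\,)^n$ is dominated by the same expression with $n$ replaced by $t\sigma p^{mb}$. I would then invoke Lemma~\ref{Convergence:lemma:ExtraTerm} with $s=r/b$, which is admissible precisely because $r\in(0,b)$ forces $r/b\in(0,1)$; this bounds the factor by $K^\prime t^{r/b}$. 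Multiplying by the constant $\tfrac{p^r(p-1)}{p^{r+1}-1}$, which involves neither $t$ nor $m$, keeps the entire second term bounded by a constant multiple of $t^{r/b}$.

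Adding the two bounds produces ${\mathds E}_m[|Y_t|^r]\le Ct^{r/b}$, with $C$ assembled from the constant of Theorem~\ref{Moment:Thm:Main}, the constant of Lemma~\ref{Convergence:lemma:ExtraTerm}, and $\sigma^{r/b}$, none of which involve $t$. The one point that needs care is the replacement of the floor $n=\lfloor t\sigma p^{mb}\rfloor$ by the real quantity $t\sigma p^{mb}$: one must verify that this substitution moves both terms in the direction of an upper bound, enlarging $n^{r/b}$ in the first term and, since the base does not exceed $1$, enlarging the subtracted power in the second, so that the discrete moment is genuinely controlled by the continuous expressions supplied by the theorem and the lemma. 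Everything else is bookkeeping of $t$-independent constants.
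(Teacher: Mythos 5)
Your proposal is correct and follows essentially the same route as the paper: apply Theorem~\ref{Moment:Thm:Main} at the integer time $\lfloor t\lambda_m\rfloor$, cancel $p^{mr}$ against $p^{-mr}$ in the first term via the scaling $\lambda_m=\sigma p^{mb}$, and control the exceptional term with Lemma~\ref{Convergence:lemma:ExtraTerm} using $s=r/b\in(0,1)$. The only cosmetic difference is in handling the floor: the paper re-parametrizes time as $t_m=\lfloor t\lambda_m\rfloor/\lambda_m\leq t$ and applies both estimates at $t_m$, whereas you keep the integer $n$ and pass to the real exponent $t\sigma p^{mb}$ by monotonicity of $a\mapsto a^x$ for $a\in[0,1]$ --- an equivalent bookkeeping choice.
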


\begin{proof}
For any positive real number $t$, there is an increasing sequence $(t_m)$ so that \[t_m\lambda_m = \floor{t\lambda_m}\quad \text{and} \quad \lim_{m\to \infty} t_m = t.\] The equality \[{\mathds E}_m[|Y_t|^r] = {\mathds E}_m^\ast[|S_{t_m\lambda_m}|^r]\] together with Theorem~\ref{Moment:Thm:Main}, with $n$ replaced by $t_m\lambda_m$, and Lemma~\ref{Convergence:lemma:ExtraTerm} implies that 
\begin{align*}
{\rm E}_m[|Y_t|^r] &\leq K_1(t_m\lambda_m)^{\frac{r}{b}}p^{-mr} + \left(1 - \left(1 - \dfrac{\beta p^{b}\big(1+ \frac{1}{p^{mb}}\big)}{p^{mb}}\right)^{t_m\sigma p^{mb}}\right)\frac{p^r(p-1)}{p^{r+1}-1}\\
& \leq K_1(t_m\sigma p^{mb})^{\frac{r}{b}}p^{-mr} + K_2(t_m\sigma)^{\frac{r}{b}}\\
&\leq Kt_m^{\frac{r}{b}}\sigma^{\frac{r}{b}} \leq Kt^{\frac{r}{b}}\sigma^{\frac{r}{b}},
\end{align*} %
since $t_m$ is bounded above by $t$.

\end{proof}

Henceforth, take $m$ to be greater than or equal to $N(p,b)$.

\begin{proposition}\label{tight}
Each stochastic process $(F([0,\infty)\colon \Zp), \Pb_m, Y)$ has a version with sample paths in $D([0,\infty)\colon \Zp)$, a process $(D([0,\infty)\colon \Zp), \Pb^m, Y)$. Furthermore, the sequence of measures $(\Pb^m)$ with paths in $D([0,\infty)\colon \Zp)$ is uniformly tight.
\end{proposition}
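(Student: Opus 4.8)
The plan is to deduce both assertions from the uniform single-time moment bound $\EXm[|Y_t|^r] \le Ct^{r/b}$ established in the preceding proposition, exploiting two structural features. First, the state space $\Zp$ is compact, so the compact containment condition needed for tightness in $D([0,\infty)\colon\Zp)$ holds automatically and the problem reduces to controlling the temporal oscillation of the paths. Second, each embedded process has stationary, independent increments in $\Gm$, so that moment bounds on the \emph{increments} of $Y$ follow from the one-time bounds already proved. For the existence of a c\`adl\`ag version, the explicit model $t \mapsto \Gamma_m\big(S_{\floor{t\lambda_m}}\big)$ furnishes a right-continuous, piecewise-constant path with left limits that realizes the finite dimensional distributions defining $\Pb_m$ in \eqref{Convergence:Eq:neMeas}; since each such path lies in $D([0,\infty)\colon\Zp)$ by inspection, pushing $\Pb^\ast_m$ forward along this map produces the desired version $(D([0,\infty)\colon\Zp),\Pbm,Y)$. (Alternatively, the increment moment bound below yields a c\`adl\`ag modification through the criterion of \cite{cent}.)

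First I would reduce increments over a time interval to the quantities controlled by Theorem~\ref{Moment:Thm:Main}. Because $\Gamma_m$ is an isometry, for $s \le t$ one has $|Y_t - Y_s| = \big|S_{\floor{t\lambda_m}} - S_{\floor{s\lambda_m}}\big|$ in the $\Gm$-metric, and the stationarity and independence of the increments of $S$ give that this random variable has the same law as $\big|S_{\floor{t\lambda_m} - \floor{s\lambda_m}}\big|$. Applying Theorem~\ref{Moment:Thm:Main} with $n = \floor{t\lambda_m} - \floor{s\lambda_m}$ together with Lemma~\ref{Convergence:lemma:ExtraTerm}, and handling the floor discrepancy exactly as in the preceding proposition (so that $\lambda_m^{-1}n$ is comparable to $t-s$), yields a constant $C$ independent of $m$ with $\EXm[|Y_t - Y_s|^r] \le C\,|t-s|^{r/b}$ for every $r \in (0,b)$ and every $m > N(p,b)$.

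Since $r/b < 1$, this single-interval estimate is too weak for any modulus-of-continuity argument; the key step is to upgrade it using independence of increments. For times $u \le s \le t$ the increments $Y_s - Y_u$ and $Y_t - Y_s$ depend on disjoint blocks of the $X_i$ and are therefore independent, so for $\gamma\in(0,b)$,
\[
\EXm\big[|Y_t - Y_s|^\gamma\,|Y_s - Y_u|^\gamma\big] = \EXm\big[|Y_t-Y_s|^\gamma\big]\,\EXm\big[|Y_s-Y_u|^\gamma\big] \le C^2\,(t-s)^{\gamma/b}(s-u)^{\gamma/b} \le C^2\,(t-u)^{2\gamma/b}.
\]
Choosing $\gamma \in (b/2,\,b)$ makes the exponent $2\gamma/b$ strictly greater than $1$, which is precisely the hypothesis of the product-moment tightness criterion for the Skorohod space (Billingsley \cite{bil1}, and its $D([0,\infty)\colon\Zp)$ form in \cite{cent}), with a constant uniform in $m$. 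Combined with the automatic compact containment from compactness of $\Zp$, this gives tightness on each interval $[0,T]$ and hence uniform tightness of $(\Pbm)$ on $D([0,\infty)\colon\Zp)$. I expect the main obstacle to be exactly this upgrade: the naive moment exponent $r/b$ lies below the threshold $1$ demanded by any oscillation estimate, and it is only the independence of increments—converting the one-interval bound into the adjacent-pair bound with doubled exponent—that rescues the argument; secondary care is required to keep every constant uniform in $m$ through the floor-function bookkeeping and to pass from compact time intervals to all of $[0,\infty)$.
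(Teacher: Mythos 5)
Your proposal is correct and takes essentially the same route as the paper's own proof: both reduce increment moments to the single-time bound $\EXm[|Y_t|^r]\leq Ct^{r/b}$ via the isometry $\Gamma_m$ and stationarity, factor the adjacent-pair moment $\EXm\big[|Y_{t_3}-Y_{t_2}|^r|Y_{t_2}-Y_{t_1}|^r\big]$ using independence of increments, choose $r\in\left(\frac{b}{2},b\right)$ so that the exponent $\frac{2r}{b}$ exceeds $1$, and invoke the criterion of Chentsov \cite{cent} with a constant uniform in $m$ to obtain both the c\`adl\`ag version and uniform tightness. Your additional remarks (the explicit piecewise-constant model realizing the version, and automatic compact containment from compactness of $\Zp$) are correct but not needed beyond what the Chentsov argument already delivers.
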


\begin{proof}
For any strictly increasing finite sequence $(t_1, t_2, t_3)$ in $[0, \infty)$, the independence of the increments of the process $(F(\mathds N_0\colon \Gm), \Pb^\ast_m, S)$ implies that 
\begin{align*}
\EXm\!\left[\big|Y_{t_3} - Y_{t_2}\big|^r\big|Y_{t_2} - Y_{t_1}\big|^r\right] & = {\mathds E}^\ast_m\!\left[\big|Y_{t_3} - Y_{t_2}\big|^r\right]{\mathds E}^\ast_m\!\left[\big|Y_{t_2} - Y_{t_1}\big|^r\right]\\ & \leq C(t_3-t_2)^\frac{r}{b}C(t_2-t_1)^\frac{r}{b} \leq C^2(t_3-t_1)^{\frac{2r}{b}}.
\end{align*}
Take $r$ to be any real number in $\left(\frac{b}{2}, b\right)$ to verify that $(F([0,\infty)\colon \Zp), \Pb_m, Y)$ satisfies the criterion of Chentsov, which implies that the stochastic process $(F([0,\infty)\colon \Zp), \Pb_m, Y)$ has a version with paths in $D([0,\infty)\colon \Zp)$, a process $(D([0,\infty)\colon \Zp), \Pbm, Y)$.  Uniformity of the constant $C$ in both $m$ and the triple of time points implies that $(\Pbm)$ is a uniformly tight sequence of probability measures \cite{cent}.  
\end{proof}

Continuity from above of the measure $\Pb^m$ together with the right continuity of the paths implies Proposition~\ref{prop:qp:concenration}.  The omitted proof is essentially identical to the proof of the same result in the earlier settings \cite{WJPA, W:Expo:24}.
 
\begin{proposition}\label{prop:qp:concenration}
The measure $\Pbm$ is concentrated on the subset of $\Gamma_m(\Gm)$-valued paths in $D([0,\infty)\colon \Zp)$ that are constant on each interval in $\big\{\big[(n-1)\tau_m, n\tau_m\big)\colon n\in\mathds N\big\}$.
\end{proposition}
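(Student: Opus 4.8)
The plan is to transfer a property that the abstract process $\Y$ enjoys by construction onto the c\`{a}dl\`{a}g version $(D([0,\infty)\colon \Zp), \Pbm, Y)$, which shares with $\Y$ only its finite dimensional distributions. The starting observation is that for any $t$ in $[(n-1)\tau_m, n\tau_m)$ one has $\floor{t\lambda_m} = n-1$, so by the very definition of $\Y$,
\[
\Y_t = \Gamma_m\big(S_0 + X_1 + \cdots + X_{n-1}\big) = \Y_{(n-1)\tau_m},
\]
a $\Gamma_m(\Gm)$-valued quantity that does not depend on $t$ within the interval. Thus the abstract process is, deterministically, constant on each interval $[(n-1)\tau_m, n\tau_m)$ and valued in the finite set $\Gamma_m(\Gm)$, which has $p^m$ elements. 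The task is to promote these two properties to $\Pbm$-almost sure statements about the actual c\`{a}dl\`{a}g paths of $Y$.

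First I would fix an interval $J_n = [(n-1)\tau_m, n\tau_m)$, write $t_0 = (n-1)\tau_m$, and choose a countable dense subset $Q_n$ of $J_n$ with $t_0$ in $Q_n$. For each $t$ in $Q_n$ the event $\{Y_t = Y_{t_0}\}$ is the preimage of the closed diagonal of $\Zp\times\Zp$ under $(Y_{t_0}, Y_t)$, hence is measurable and has a probability determined by the two dimensional distribution of $(Y_{t_0}, Y_t)$. Since this distribution agrees with that of $(\Y_{t_0}, \Y_t)$, and $\Y_{t_0} = \Y_t$ deterministically, each such event has $\Pbm$-probability $1$. Exhausting $Q_n$ by an increasing sequence of finite sets and applying continuity from above of $\Pbm$ then shows that $Y$ is constant on all of $Q_n$ with probability $1$. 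Right continuity of the paths upgrades this to constancy on the whole half-open interval: for any $t$ in $J_n$ one approaches $t$ from the right within $Q_n$, which is possible precisely because $J_n$ is right open and $Q_n$ is dense, so that $Y_t = Y_{t_0}$ on the full-measure event.

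Finally I would handle the state space constraint and assemble the pieces. Because $Y_{t_0}$ has the same law as $\Y_{t_0}$, it lies in the finite, hence closed and measurable, set $\Gamma_m(\Gm)$ with probability $1$; combined with constancy, the path is $\Gamma_m(\Gm)$-valued throughout $J_n$ almost surely. Intersecting the resulting full-measure events over the countably many intervals indexed by $n$ in $\mathds N$ produces a single full-measure set on which every path is both $\Gamma_m(\Gm)$-valued and constant on each interval of $\big\{\big[(n-1)\tau_m, n\tau_m\big)\colon n\in\mathds N\big\}$, which is the assertion. The main obstacle is one of careful bookkeeping rather than of depth: one must confirm that the diagonal and constancy events are genuinely measurable and, above all, that right continuity alone reaches every point of each right open interval from within the dense set, the step at which the half-open nature of the intervals is essential.
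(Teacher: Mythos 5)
Your proposal is correct and takes essentially the same approach as the paper: the paper's (omitted) proof likewise transfers the deterministic constancy of the abstract process to the c\`{a}dl\`{a}g version by using agreement of finite dimensional distributions, continuity from above of $\Pbm$ applied over a countable dense set of times in each interval, and right continuity of the paths to reach every point of each half-open interval. No gaps to report.
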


%%%%%%%%%%%%%%%%%%%%%%%%%%%%%%%%%%%%%%%%%%%%%%%%%%%%%%%%%%%%%%%%%%%%%%
%%%%%%%%%%%%%%%%%%%%%%%%%%%%%%%%%%%%%%%%%%%%%%%%%%%%%%%%%%%%%%%%%%%%%%
%%%%%%%%%%%%%%%%%%%%%%%%%%%%%%%%%%%%%%%%%%%%%%%%%%%%%%%%%%%%%%%%%%%%%%

\subsection{Weak convergence}

It is helpful to use more precise notation since there are two different types of quotient maps involved in this section.  Again denote by $\Gdmap{\cdot}$ the quotient map from $\mathds Q_p$ to $\mathds Q_p\slash \mathds Z_p$ and by $\Gdmap{\cdot}_m$ the quotient map from $p^{-m}\mathds Z_p$ to $\Gmdual$.  View $p^{-m}\mathds Z_p$ as a subset of $\mathds Q_p$ and extend functions from $p^{-m}\mathds Z_p$ to all of $\mathds Q_p$ by using cutoff functions.  Similarly, view $\Gmdual$ as a subset of $\Qp\slash\Zp$ and extend functions similarly.  For any positive real number $D$, take $\sigma$ to be the value \[\sigma = \frac{D}{\beta}.\]  This way of writing $\sigma$ gives a direct specification of the time scaling in order to have a diffusion constant $D$ for the limiting process.

Denote by $E_m$ the function that is defined for any $(t,\y)$ in $(0,\infty)\times \mathds Q_p\slash\Zp$ by %
\[
E_m(t, \y) = \begin{cases}\left(\phi_m(\y_m)\right)^{\floor{t\lambda(m)}} &\mbox{if }|\y| \leq p^{m}\\0 &\mbox{if }|\y| > p^{m}.\end{cases}%
\]

\begin{lemma}\label{last:Emlimit}
For any $t$ in $(0, \infty)$, \[\lim_{m\to \infty} \int_{\Qp\slash\Zp}\left|E_m(t,\y) - \e^{-D\left(|\y|^b-\beta^{-1}\right)t}\right|\,{\rm d}\y = 0.\]
\end{lemma}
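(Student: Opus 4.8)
The plan is to read the integral as a sum against the counting measure $\mu_0$ and then run a dominated-convergence argument in the discrete radial variable. First I would use that $\mu_0$ assigns unit mass to every point of $\Qp\slash\Zp$ and that both $E_m(t,\cdot)$ and the limiting characteristic function depend on $\y$ only through $|\y|$. Grouping the points by their common value $|\y|=p^k$, of which there are $(1-p^{-1})p^k$ when $k\ge 1$ and exactly one, the coset $\Gdmap{0}$, when $k=0$, lets me write
\[
\int_{\Qp\slash\Zp}\bigl|E_m(t,\y)-\e^{-D(|\y|^b-\beta^{-1})t}\bigr|\,\d\mu_0(\y) = \sum_{k=0}^{\infty} w_k\,\delta_m(k),
\]
where $\delta_m(k)$ is the common value of the integrand on $\{|\y|=p^k\}$, $w_0=1$, and $w_k=(1-p^{-1})p^k$ for $k\ge 1$. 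The term $k=0$ drops out: by Proposition~\ref{Construction:Prop:PhiFormula} one has $E_m(t,\Gdmap{0})=\phi_X(\Gdmap{0})^{\floor{t\lambda_m}}=1$, and since the limiting characteristic function is the Fourier transform of the probability density $\rho(t,\cdot)$ it also equals $1$ at $\Gdmap{0}$ (the formula \eqref{Limiting:ZP:CharFun} being understood for $\y\neq\Gdmap{0}$). For $k\ge 1$ there are two regimes: when $1\le k\le m$ one has $E_m(t,\y)=\phi(k)^{\floor{t\lambda_m}}$, while for $k>m$ one has $E_m(t,\y)=0$, so $\delta_m(k)=\e^{-D(p^{kb}-\beta^{-1})t}$ there.

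Next I would establish the pointwise limit $\lim_{m\to\infty}\phi(k)^{\floor{t\lambda_m}}=\e^{-D(p^{kb}-\beta^{-1})t}$ for each fixed $k\ge 1$. Writing the formula of Proposition~\ref{Construction:Prop:PhiFormula} as $\phi(k)=1-a_m(k)\,p^{-mb}$, a short computation gives $a_m(k)\to\beta(p^{kb}-\beta^{-1})=\beta p^{kb}-1$ as $m\to\infty$. Since $\lambda_m=\sigma p^{mb}$, one has $\floor{t\lambda_m}\,p^{-mb}\to t\sigma$, and combining $\log(1-a_m(k)p^{-mb})\sim -a_m(k)p^{-mb}$ with this gives $\floor{t\lambda_m}\log\phi(k)\to -t\sigma\beta(p^{kb}-\beta^{-1})$. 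The normalization $\sigma=D/\beta$ makes $t\sigma\beta=tD$, which yields exactly the claimed limit, so each summand $w_k\,\delta_m(k)$ tends to $0$.

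Finally I would exhibit a summable dominating function independent of $m$ and invoke the dominated convergence theorem for series. The delicate point, and the main obstacle, is that the counting weights $w_k\sim p^k$ grow geometrically, so I must show that $\delta_m(k)$ decays faster than $p^{-k}$ uniformly in $m$, including at the moving top index $k=m$. For $1\le k\le m-1$ the moment estimate \eqref{Moment:Char:Bound} bounds $\phi(k)^{\floor{t\lambda_m}}$ by $\exp\bigl(-\floor{t\lambda_m}\,\bigl(\beta p^{kb}-\tfrac{p^{mb}}{p^{mb}-1}\bigr)p^{-mb}\bigr)$; using $m>M(p,b)$ to guarantee $\beta p^{kb}-\tfrac{p^{mb}}{p^{mb}-1}\ge c_0 p^{kb}$ for some $c_0>0$, together with $\floor{t\lambda_m}\,p^{-mb}\ge\tfrac12 t\sigma$ for $m$ large, this gives $\phi(k)^{\floor{t\lambda_m}}\le\exp(-c\,p^{kb})$ with $c$ independent of $m$ and $k$. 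At the top index $k=m$, where $\phi(m)<0$, I would instead use that $|\phi(m)|\to\beta-1\in(0,1)$ by Proposition~\ref{Prim:Prop:Boundonalphaoverpb}, so that $|\phi(m)|$ is bounded away from $1$ and $|\phi(m)|^{\floor{t\lambda_m}}\le\exp(-c\,p^{mb})$ as well. Since the limiting term satisfies $\e^{-D(p^{kb}-\beta^{-1})t}\le \e^{D\beta^{-1}t}\e^{-Dp^{kb}t}$, each summand obeys $w_k\,\delta_m(k)\le G(k):=C\,w_k\exp(-c\,p^{kb})$, and $\sum_{k\ge1}G(k)<\infty$ because $\exp(-c\,p^{kb})$ decays doubly exponentially and overwhelms $p^k$. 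Dominated convergence then forces the sum, and hence the integral, to tend to $0$.
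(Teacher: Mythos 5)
Your proof is correct, and it is organized differently from the paper's. The paper never rewrites the integral as a radial series with an $m$-independent summable majorant. Instead it starts from $0\le 1-z\le \e^{-z}$, deduces $0\le (1-x/n)^n\le \e^{-x}$, and invokes Dini's theorem to get uniform convergence of $(1-x/n)^n$ to $\e^{-x}$ on compact intervals; it then splits $\Qp\slash\Zp$ into three moving regions: a ball $\{|\y|\le R_m\}$ with $R_m\to\infty$ chosen so that uniform convergence handles that piece, an intermediate-and-far region $A_m$ on which $0\le E_m(t,\y)\le \e^{-D(|\y|^b-\beta^{-1})t}$, so that the difference is dominated by the integrable limit function itself and the contribution vanishes as a tail integral, and finally the exceptional circle $\{|\y|=p^m\}$. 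On that circle the paper argues exactly as you do at your top index $k=m$: $\phi(m)$ is negative but $|\phi(m)|$ tends to $\beta-1\in(0,1)$, so $|\phi(m)|^{\floor{t\lambda_m}}$ decays doubly exponentially and overwhelms the circle's measure $\sim p^m$. What your route buys: a single application of dominated convergence against the counting measure in the radial index, with an explicit majorant $G(k)\sim p^k\exp(-c\,p^{kb})$ extracted from the moment bound \eqref{Moment:Char:Bound}; this avoids Dini's theorem and the diagonal choice of the sequence $(R_m)$, at the cost of re-deriving uniform-in-$m$ lower bounds on the exponents. What the paper's route buys: on the bulk region the dominating function is the limit function itself, so no estimate on $\phi(k)$ beyond nonnegativity is needed there.

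One point where you are more careful than the paper is the coset $\Gdmap{0}$. Under the paper's convention $|\Gdmap{0}|=0$, the expression $\e^{-D(|\y|^b-\beta^{-1})t}$ evaluates to $\e^{Dt/\beta}>1$ at $\Gdmap{0}$, while $E_m(t,\Gdmap{0})=1$ for every $m$; read literally, that single point would contribute the fixed positive amount $\e^{Dt/\beta}-1$ and the stated limit would fail. The lemma is true precisely under your reading---the limit function is the characteristic function of the limiting process, equal to $1$ at $\Gdmap{0}$, with \eqref{Limiting:ZP:CharFun} understood for $\y\ne\Gdmap{0}$---and this is also the reading required when the lemma is applied in Proposition~\ref{5:prop:restrictedhistconv}, since there the characteristic function multiplies against characters to recover a probability density. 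The paper's proof passes over this point silently; your treatment of the $k=0$ term makes it explicit.
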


\begin{proof}
For any $z$ in $[0, 1]$, the inequality \begin{equation*}0 \leq 1-z \leq \e^{-z}\end{equation*} implies that for any $m$ and any $x$ in $[0, m]$, \begin{equation}\label{ConofProcess:expineq:Dini}0 \leq \left(1-\frac{x}{m}\right)^m \leq \left(\e^{-\frac{x}{m}}\right)^m = \e^{-x}.\end{equation}  Since the sequence on the lefthand side of \eqref{ConofProcess:expineq:Dini} is increasing, for any positive real number $R$, Dini's theorem implies that $\left(1-\frac{x}{m}\right)^m$ converges uniformly to $\e^{-x}$ in $[0, R]$.  For any $\y$, if $|\y|$ is less than $p^m$ the Proposition~\ref{Prim:Prop:Boundonalphaoverpb} implies that \begin{equation}\label{COP:Lem:expext}\beta|\y|^b \leq p^{mb},\end{equation} and so \eqref{ConofProcess:expineq:Dini} together with the integrability of the exponential function over $(-\infty, 0]$ implies that there is a divergent, increasing, positive sequence $(R_m)$ in $(0, p^{m-1}]$ so that \begin{equation}\label{COP:Lem:expextSeq}\lim_{m\to \infty} \int_{|\y|\leq R_m}\left|E_m(t,\y) - \e^{-D\left(|\y|^b-\beta^{-1}\right)t}\right|\,{\rm d}\y = 0.\end{equation}  Denote by $A_m$ the set \[A_m = \{\y \in \Qp\slash\Zp\colon R_m<|\y|<p^m\; \text{or} \; |\y| > p^m\}.\]  The inequality \eqref{COP:Lem:expext} implies that for any $\y$ in $A_m$, \[\left|E_m(t,\y) - \e^{-D\left(|\y|^b-\beta^{-1}\right)t}\right| \leq \e^{-D\left(|\y|^b-\beta^{-1}\right)t},\] and so \begin{equation}\label{COP:Lem:expextFar}\lim_{m\to \infty} \int_{A_m}\left|E_m(t,\y) - \e^{-D\left(|\y|^b-\beta^{-1}\right)t}\right|\,{\rm d}\y = 0.\end{equation}  Since $|1- \beta + p^{-mb}|$ is in $[0,1)$ if $m$ is large enough, \begin{align}\label{COP:Lem:expextSingle}&\lim_{m\to \infty} \int_{|\y|=p^m}\left|E_m(t,\y) - \e^{-D\left(|\y|^b-\beta^{-1}\right)t}\right|\,{\rm d}\y \notag\\&\hspace{1.5in}\leq \lim_{m\to \infty} p^{md}\left|(1 - \beta - p^{-mb})^{\floor{t\lambda_m}} - \e^{-D\left(|\y|^b-\beta^{-1}\right)t}\right| = 0.\end{align}  Decompose the integral in the statement of the lemma into three regions, the ball of radius less than $R_m$, the set $A_m$, and the circle of radius $p^m$ and use \eqref{COP:Lem:expextSeq}, \eqref{COP:Lem:expextFar}, and \eqref{COP:Lem:expextSingle} to obtain the desired limit. 
\end{proof}

The set $H_R$ of \emph{restricted histories} for paths in $D([0, \infty)\colon \Zp)$ is the set of all histories whose route is a finite sequence of balls.  

\begin{proposition}\label{5:prop:restrictedhistconv}
For any history $h$ in $H_R$ whose route is a finite sequence of balls, \[\Pb^m(\C(h)) \to \Pb(\C(h)).\]
\end{proposition}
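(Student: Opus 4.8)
The plan is to evaluate both cylinder probabilities on the Fourier side and to exploit the $L^{1}$-convergence $E_{m}\to\phi$ supplied by Lemma~\ref{last:Emlimit}. Fix a history $h=((0,U_{0}),(t_{1},U_{1}),\dots,(t_{n},U_{n}))$ whose route consists of balls. If $0\notin U_{0}$ both sides vanish, so assume $0\in U_{0}$. The version $\Pbm$ produced in Proposition~\ref{tight} shares the finite-dimensional distributions of $\Pb_{m}$, so by Proposition~\ref{prop:qp:concenration} the value $\Pbm(\C(h))$ is the probability of the corresponding event for the discrete skeleton through~\eqref{Convergence:Eq:neMeas}. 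Because $\Gamma_{m}$ is an isometric injection, once $m$ exceeds the radius index of every $U_{i}$ the preimage $V_{i}=\Gamma_{m}^{-1}(U_{i})$ is a ball in $\Gm$. Writing $n_{i}=\floor{t_{i}\lambda_{m}}$ and using the independence of the increments from Proposition~\ref{Construction:Prop:PDFcalculation}, I would record
\[
\Pbm(\C(h))=\int_{V_{1}}\!\cdots\!\int_{V_{n}}\prod_{i=1}^{n}\rho\big(n_{i}-n_{i-1},\,\Gmap{z_{i}}-\Gmap{z_{i-1}}\big)\,d\mu_{m}(\Gmap{z_{1}})\cdots d\mu_{m}(\Gmap{z_{n}}),
\]
with $\Gmap{z_{0}}=\Gmap{0}$, while~\eqref{Framework:EQ:FormulaFDDensity} presents $\Pb(\C(h))$ as the same iterated integral over $U_{1},\dots,U_{n}$ against the products of $\rho(t_{i}-t_{i-1},\cdot)$ on $\Zp$.

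Next I would transfer each factor to the dual group, using that $\rho(k,\cdot)=\Fm^{-1}(\phi_{X}(\cdot)^{k})$ and that all functions in sight are real-valued by Proposition~\ref{Construction:Prop:PhiFormula} and Lemmas~\ref{Construction:Eq:indicatori}~and~\ref{Construction:Eq:Dualindicatori}. The case $n=1$ is the model: Plancherel gives
\[
\Pbm(\C(h))=\int_{\Gmdual}(\Fm\mathds 1_{V_{1}})(\y)\,\phi_{X}(\y)^{n_{1}}\,d\tilde{\mu}_{m}(\y),
\]
in which $\Fm\mathds 1_{V_{1}}$ is $\Vol(V_{1})$ times the indicator of a dual ball by Lemma~\ref{Construction:Eq:indicatori}, and $\phi_{X}(\y)^{\floor{t_{1}\lambda_{m}}}=E_{m}(t_{1},\y)$. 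Viewing $\Gmdual$ inside $\Qp\slash\Zp$, the transform $\Fm\mathds 1_{V_{1}}$ stabilizes to $\F_{0}\mathds 1_{U_{1}}$ for large $m$, and the estimate
\[
\left|\Pbm(\C(h))-\int_{\Qp\slash\Zp}(\F_{0}\mathds 1_{U_{1}})(\y)\,\phi(t_{1},\y)\,d\mu_{0}(\y)\right|\le\|\Fm\mathds 1_{V_{1}}\|_{\infty}\,\|E_{m}(t_{1},\cdot)-\phi(t_{1},\cdot)\|_{L^{1}}+o(1)
\]
tends to $0$ by Lemma~\ref{last:Emlimit}, the right-hand integral being exactly $\Pb(\C(h))$.

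For general $n$ I would substitute the inverse-transform representation of each $\rho(n_{i}-n_{i-1},\cdot)$ into the product integral and carry out the $z$-integrations using Lemma~\ref{Construction:Eq:Dualindicatori}. This realizes $\Pbm(\C(h))$ as a single integral over $(\Gmdual)^{n}$ whose integrand is a product of ball-indicator transforms coupling successive dual frequencies, multiplied by the transition factors $\phi_{X}(\Gdmap{y_{i}})^{n_{i}-n_{i-1}}$. Each transition factor converges pointwise to $\phi(t_{i}-t_{i-1},\Gdmap{y_{i}})$, since $\phi_{X}(\Gdmap{y_{i}})=1-O(p^{-mb})$ and $\sigma\beta=D$; the ball-indicator transforms are bounded with support contained in fixed dual balls; and every factor is bounded by $1$. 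Hence the dominated convergence theorem, with a dominating function built from the compactly supported ball transforms, delivers the limit, which is the iterated integral representing $\Pb(\C(h))$ in~\eqref{Framework:EQ:FormulaFDDensity}.

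The genuine content is the single-time convergence; the principal obstacle is making the multi-time passage to the limit rigorous, and three points need care. First, the floor discretization: $n_{i}-n_{i-1}=\floor{t_{i}\lambda_{m}}-\floor{t_{i-1}\lambda_{m}}$ differs from $\floor{(t_{i}-t_{i-1})\lambda_{m}}$ by at most one, contributing an extra factor $\phi_{X}(\y)=1-O(p^{-mb})$ that tends to $1$ and so leaves the limit unchanged. Second, the dual groups $\Gmdual$ enlarge with $m$ inside $\Qp\slash\Zp$, so one must confirm that the ball-indicator transforms and the domains of integration genuinely converge to their $\Qp\slash\Zp$ counterparts. Third, convergence of the $n$-fold product of transition factors rests on the uniform bound $\lvert\phi_{X}\rvert\le1$ together with the integrable envelope provided by the compactly supported ball transforms; this is what upgrades the single-factor convergence of Lemma~\ref{last:Emlimit} to convergence of the whole integral.
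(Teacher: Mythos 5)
Your proposal is correct in substance, but it carries out the key limit on the opposite side of the Fourier transform from the paper, so the two arguments are worth contrasting. The paper stays on the physical side: it defines the piecewise-constant extension
\[
\rho^m(t,z)=\sum_{\x\in\Gm}\rho\big(\floor{t\lambda_m},\x\big)\mathds 1_{x+p^m\Zp}(z)
\]
of the discrete transition density, observes via Fourier inversion that
\[
\left|\rho^m(t,x)-\rho(t,x)\right|\leq \int_{\Qp\slash\Zp}\left|E_m(t,\y)-\e^{-D\left(|\y|^b-\beta^{-1}\right)t}\right|\,{\rm d}\y =\varepsilon_m(t),
\]
a bound independent of $x$, so Lemma~\ref{last:Emlimit} gives \emph{uniform} convergence $\rho^m(t,\cdot)\to\rho(t,\cdot)$ on $\Zp$, and then passes to the limit in the iterated physical-side integral $\int_{U_1}\cdots\int_{U_k}\prod_i\rho^m(t_i-t_{i-1},x_i-x_{i-1})$ in one stroke, since $\Zp$ has finite measure; the multi-time case is thereby free of any combinatorics. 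You instead push the entire cylinder probability to the dual side, obtaining an integral over $(\Gmdual)^n$ of transition factors $\phi_X(\Gdmap{y_i})^{n_i-n_{i-1}}$ against ball transforms coupling successive frequencies, and close with dominated convergence. This does work: by ultrametricity the supports of the coupled ball transforms confine every frequency to a fixed finite subset of $\Qp\slash\Zp$, giving an integrable envelope, and $|\phi_X|\leq 1$ throughout; the cost is exactly the bookkeeping you acknowledge. Interestingly, for $n\geq 2$ your argument bypasses Lemma~\ref{last:Emlimit} altogether—pointwise convergence of $\phi_X(\Gdmap{y})^{\floor{t\lambda_m}}$ plus domination suffices—so it is somewhat more self-contained, and you also explicitly handle the floor mismatch $\floor{t_i\lambda_m}-\floor{t_{i-1}\lambda_m}\neq\floor{(t_i-t_{i-1})\lambda_m}$, which the paper silently absorbs. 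Two small inaccuracies to repair: the estimate $\phi_X(\Gdmap{y})=1-O(p^{-mb})$ holds only pointwise in $\Gdmap{y}$, not uniformly (on the outermost circle $\phi_X$ is close to $1-\beta<0$), which is all that dominated convergence needs but should be stated as such; and the ball transforms for balls not centered at $0$ carry unimodular character factors, so the parenthetical claim that all functions in sight are real-valued is false, though harmless to the argument.
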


\begin{proof}
For any ball $B$ of radius $p^{-m}$ in $\Zp$, if $x$ is in $B$, then $\x_m$ is the unique element of $\Gm$ so that $\Gamma_m(\x_m)$ is in $B$, and so \eqref{Convergence:Eq:neMeas} implies that%
\begin{align*}
\Pbm(Y_t\in B) &= \Pb_m\big(S_{t_m\lambda_m}\in \Gamma_m^{-1}(B)\big).
\end{align*}
Denote by $\rho^m$ the function that for each $t$ in $(0,\infty)$ is given by %
\begin{equation}
\rho^m(t, z) = \sum_{\x \in \Gm} \rho\big(t_m\lambda_m,\x\big)\mathds 1_{x + p^m\Zp}(z), %
\end{equation}
so that for any $t$ in $(0,\infty)$ and any set $U$ in $\Zp$ that is a disjoint union of balls that each have radius at least $p^{-m}$, %
\[
\Pb^m(Y_t\in U) = \int_{\Zp}\rho^m(t,x)\,{\rm d}x = \Pb_m^\ast(S_{t_m\lambda_m}\in\Gmap{U}).
\]%  

Take $\rho$ to be the probability mass function for the $\Zp$-valued process.  The absolute value of the difference between $\rho^m$ and $\rho$ is given by % 
\begin{align}\label{5:prop:restrictedhistconv:unlim}
\left|\rho^m(t, x) - \rho(t, x)\right| & \leq \int_{\Qp\slash\Zp} \left|\chi(xy) E_m(t,\y) - \chi(xy)\e^{-D\left(|\y|^b-\beta^{-1}\right)t}\right|\,{\rm d}\y\notag\\& = \int_{\Qp\slash\Zp} \left|E_m(t,\y) - \e^{-D\left(|\y|^b-\beta^{-1}\right)t}\right|\,{\rm d}\y = \varepsilon_m(t) \to 0.
\end{align} %
The sequence $(\varepsilon_m(t))$ is independent of $\x$, which implies that %s
$(\rho_m(t, \cdot))$ converges uniformly on $\Zp$ to $\rho(t, \cdot)$.

Take $h$ to be any history whose route is a finite sequence of balls and without loss in generality suppose that $U_h(0)$ is the set $\{0\}$.  Simplify the notation by writing \[e_h = (t_0, \dots, t_k) \quad \text{and}\quad U_h = (\{0\}, U_1, \dots, U_k).\]  For any $i$ in $\{0, \dots, k\}$, denote by $r_i$ the radius of $U_i$. For any $m$ so that \[p^{-m} < \min\{r_1, \dots, r_k\},\] the uniform convergence given by \eqref{5:prop:restrictedhistconv:unlim} implies that %
\begin{align*}%
\Pbm(\C(h)) &= \Pbm(Y_{t_1}\in U_1, Y_{t_2}\in U_2, \dots, Y_{t_n}\in U_n)\\
&= \int_{U_1} \cdots \int_{U_k} \prod_{i\in\{1, \dots, k\}}\rho^m(t_i-t_{i-1}, x_i -x_{i-1})\,{\rm d}\mu(x_k)\cdots {\rm d}\mu(x_1)\\
&\to  \int_{U_1}\cdots \int_{U_k} \prod_{i\in\{1, \dots, k\}}\rho(t_i-t_{i-1}, x_i-x_{i-1})\,{\rm d}\mu(x_k)\cdots {\rm d}\mu(x_1) = \Pb(\C(h)).
\end{align*}
\end{proof}

\begin{proof}[Proof of Theorem~\ref{Sec:Con:Theorem:MAIN}]
The intersection of any two balls in $\Zp$ is again a ball in $\Zp$, and so $\C(H_{R})$ is a $\pi$-system that generates the $\sigma$-algebra of cylinder sets of paths in $D([0, \infty)\colon \Zp)$.  The uniform tightness of the family of measures $\{\Pbm\colon m\in \mathds N_0\}$ that Proposition~\ref{tight} guarantees together with the convergence for any restricted history $h$ of $(\Pbm(\C(h)))$ to $\Pb(\C(h))$ implies Theorem~\ref{Sec:Con:Theorem:MAIN}.
\end{proof}

As in the real case and the $p$-adic case \cite{BW, WJPA}, the scaling factor for the time scales is proportionate to a power of the scaling factor for the space scales.  The power is the exponent of the Vladimirov-Kochubei operator, and is independent of $p$.

\end{document}